\newcommand\shf{\sigma}
\newcommand\Bun{\operatorname{\mathfrak{B}un}}
\newcommand\Bunc[1]{\operatorname{\mathfrak{B}un}_c^{#1}}
\newcommand\Man{\operatorname{\mathfrak{M}an}}
\newcommand\IMan{\operatorname{\mathfrak{IM}an}}
\renewcommand{\hat}[1]{\widehat{#1}}
\newcommand\dl{\mathrm{dl}}
\newcommand\B{\operatorname{ {}^b } }
\newcommand\boxb[1]{\square_b}
\newcommand\paperintro%
\newcommand\paperbody%
\newtheorem{theorem}{Theorem}
\newtheorem{corollary}[theorem]{Corollary}
\newtheorem{lemma}[theorem]{Lemma}
\newtheorem{proposition}[theorem]{Proposition}
\numberwithin{equation}{section}
\theoremstyle{remark}
\newtheorem{definition}{Definition}
\newcommand\cFTs{{}^{\Phi}\overline{T}\kern-1pt{}^*}
\newcommand\even{\mathrm{even}}
\newcommand\Ch{\operatorname{Ch}}
\newcommand\tG{\tilde{G}}
\newcommand\cI{\mathcal{I}}
\newcommand\cM{\mathcal{M}}
\newcommand\cR{\mathcal{R}}
\newcommand\hB{\widehat{B}}
\newcommand\hGB{\widehat{G/B}}
\newcommand\hG{\widehat{G}}
\newcommand\hK{\widehat{K}}
\newcommand\bbC{\mathbb C}
\newcommand\bbP{\mathbb P}
\newcommand\bbR{\mathbb R}
\newcommand\bbS{\mathbb S}
\newcommand\bbZ{\mathbb Z}
\newcommand\CI{{\mathcal{C}}^{\infty}}
\newcommand\cFNs{{}^{\Phi}\overline N\kern-1pt{}^*}
\newcommand\red{\operatorname{red}}
\newcommand\ev{\operatorname{even}}
\newcommand\UU{\operatorname{U}}
\newcommand\ha{\frac{1}{2}}
\newcommand\pa{\partial}
\newcommand\Mon{\text{ on }}
\newcommand\Mor{\text{ or }}
\newcommand\Mst{\text{ s.t. }}
\begin{document}
\title[K-theory and Resolution, I]
{Equivariant K-theory and Resolution\\
I: Abelian actions}

\author{Panagiotis Dimakis and Richard Melrose}
\address{Department of Mathematics, Massachusetts Institute of Technology}
\email{pdimakis@mit.edu, rbm@math.mit.edu}

\begin{abstract}

The smooth action of a compact Lie group on a compact manifold can be
resolved to an iterated space, as made explicit by Pierre Albin and the
second author. On the resolution the lifted action has fixed isotropy type,
in an iterated sense, with connecting fibrations and this structure
descends to a resolution of the quotient. For an abelian group action the
equivariant K-theory can then be described in terms of bundles over the
base with morphisms covering the connecting maps. A similar model is given,
in terms of appropriately twisted deRham forms over the base as an iterated
space, for delocalized equivariant cohomology in the sense of Baum,
Brylinski and MacPherson. This approach allows a direct proof of their
equivariant version of the Atiyah-Hirzebruch isomorphism.

\end{abstract}

\maketitle
\tableofcontents

\paperintro
\section*{Introduction}

One intention of this note is to demonstrate that real blow-up can be an
effective tool in the analysis of smooth group actions, particularly in the
compact case. To do so, we describe equivariant K-theory in terms of
resolved spaces and in consequence introduce (here only in the abelian
case) a geometric model for the delocalized equivariant cohomology of Baum,
Brylinski and MacPherson \cite{MR86g:55006}, designed to realize an
equivariant form of the Atiyah-Hirzebruch isomorphism
\begin{equation}
\Ch:K^*_G(M)\otimes \bbC\overset{\simeq}\longrightarrow H^*_{\dl,G}(M).
\label{EKR.42}\end{equation}
The more general case of the action by a non-abelian compact Lie group will
be treated subsequently. That the non-abelian case is more intricate can be seen from the
computation of the equivariant K-theory in case of an action with single
isotropy type by Wassermann \cite{Wassermann}. See also the paper of Rosu
\cite{MR1970011}. 

Resolution of a group action, as described by Pierre Albin and the second
author in \cite{MR2560748}, replaces it by a tree of actions each with
unique isotropy type and with connecting equivariant fibrations. This
results in a similar resolution of the quotient, which we call an `iterated
space' corresponding to its smooth stratification. The description given
here of the various cohomology theories is directly in terms of smooth
`iterated' objects, bundles or forms, over these iterated spaces with
augmented `pull-back' morphisms covering the connecting
fibrations. Resolution may be thought of as replacing the `analytic
complexity' of strata by the `combinatorial complexity' of iterated
fibrations. The perceived advantage of this is that many standard arguments
can be transferred directly to this iterated setting, since the spaces are
smooth. The objects which appear here have local product structures.

The case of a compact abelian group, $G,$ acting, with single isotopy
group, on a compact manifold (with corners), $M,$ is relatively simple and
forms the core of our iterative approach. If the action is free then each
equivariant bundle is equivariantly isomorphic to the pull-back of a bundle
over the base; equivariant bundles descend to bundles. Equivariant K-theory
is then identified, as a ring, with the ordinary K-theory of the
base. However the structure of $K_G(M)$ as a module over the representation
ring of $G$ is lost in this identification. Tensor product and descent
defines an action of irreducible representations of $G$ on smooth bundles
over the base
\begin{equation}
\shf:\hG\times\Bun(Y)\longrightarrow \Bun(Y)
\label{EKR.44}\end{equation}
which projects to give the action of $\hG$ on $K_G(M).$ In realizing
equivariant K-theory and delocalized equivariant cohomology over the
resolved space we need to retain aspects of $\shf.$

For an abelian action with fixed isotropy group $B\subset G$ there is a
similar reduction to objects on the base. Equivariant bundles may be
decomposed over the dual group, $\hB,$ giving a finite number of
coefficient bundles. Lifting an element of $\hB$ into $\hG$ and taking the
tensor product with the inverse gives the coefficient bundle an action of
$\hGB.$ The case of a principal action then applies and results in a
collection of bundles $W_{\hat g}$ over the base, $Y,$ indexed by $\hat
g\in\hG.$ We assemble these into a bundle over $\hG\times Y$ -- allowed to
have different dimensions over different components -- with two additional
properties. First its support projects to a finite subset of $\hB$ and more
significantly it is `twisted' under the action of $\hGB$ on $\hG$ in the
sense that
\begin{equation}
\shf(\hat h)\otimes W_{\hat h\hat g}= W_{\hat g}.
\label{EKR.43}\end{equation}

In this setting of a single isotropy group, the delocalized equivariant
cohomology is given in terms of a twisted deRham complex. These forms are
given by finite sums of formal products
\begin{equation*}
\sum\limits_{i}\hat g_i\otimes u_i,\ u_i\in\CI(Y;\Lambda ^*)
\label{EKR.58}\end{equation*}
where the twisting law \eqref{EKR.43} is replaced by its cohomological image 
\begin{equation}
(\hat h\hat g)\otimes\Ch(\hat h)\wedge v\simeq \hat g\otimes v,\ \hat h\in\hGB,\
v\in\CI(Y;\Lambda ^*).
\label{EKR.46}\end{equation}
Here $\Ch(\hat h)$ is the Chern character of the bundle, with connection,
given by descent from the representation $\hat h$ interpreted as a trivial
bundle with equivariant action and with product connection. The reduced
bundles may be given connections, consistent with the connection on $\hat
h$ and \eqref{EKR.43} for which the Chern character is a delocalized form
in the sense of \eqref{EKR.46}. For discussions of the equivariant Chern
character see the book \cite{Berline-Getzler-Vergne1} of Berline, Getzler
and Vergne and the paper of Getzler \cite{MR1302758}.

These definitions of reduced bundles and delocalized deRham forms are
extended to iterated objects over the resolution of the quotient, $Y_*,$ by
adding morphisms covering the connecting fibrations.  This leads directly
to the Atiyah-Hirzebruch-Baum-Brylinski-MacPherson isomorphism
\eqref{EKR.42}, proved here using the six-term exact sequences which result
from successive pruning of the isotropy tree.

In outline the paper proceeds as follows. In \S\ref{Res} we recall from
\cite{MR2560748} the resolution $X_*$ of any compact Lie group action on a
compact manifold, with the quotient an iterated space $Y_*.$ The lifting of
equivariant bundles to iterated equivariant bundles on $X_*$ is described
in \S\ref{Lif} and the reduction to twisted iterated bundles over $Y_*$ is
discussed, for abelian actions, in \S\ref{RedA} -- the non-abelian case is
much more intricate because of the appearance of `Mackey twisting'. The
realization of equivariant K-theory in terms of reduced bundles is contained
in \S\ref{EquA} and this leads to the geometric model for delocalized
(abelian) equivariant cohomology in \S\ref{DelA}. The relative sequences
obtained by successive pruning of the isotropy tree are introduced in
\S\ref{CheA} and used to establish \eqref{EKR.42} in \S\ref{Iso}.
Examples of circle actions are considered in \S\ref{Ex}.

The authors are happy to thank Pierre Albin, Victor Guillemin, Eckhard
Meinrenken, Mich\`ele Vergne and David Vogan for helpful conversations.
\paperbody

\section{Resolution}\label{Res}

In \cite{MR2560748}, the resolution of the smooth action of a compact Lie
group on a compact manifold, $M,$ was described. The action stratifies $M,$
into smooth submanifolds, with the isotropy group lying in a fixed
conjugacy class of closed subgroups of $G$ on each stratum. For convenience
we shall assume, without loss of generality, that the quotient, $M/G,$ is
connected. If $M$ is not connected then $G$ acts on the set of components
and we may consider each orbit separately and so assume that $G$ acts
transitively on the set of components. Similarly, we declare the strata,
$M_\alpha\subset M,$ to consist of the images under the action of $G$ of
the individual components of the manifolds where the isotropy class is
fixed. Thus the labelling index, $\alpha\in A,$ records a little more than
the isotropy type.

We recall both the resolution of such a group action and the consequent
resolution of the quotient in terms of `iterated spaces'. This is
essentially the notion of a `resolved stratified space'.

For present purposes the category $\Man$ has as objects the \emph{compact}
manifolds with corners, not necessarily connected. Each such manifold has a
finite collection, $\cM_1(M),$ of boundary hypersurfaces $H\subset M.$ By
definition of a `manifold with corners' we require that these boundary
hypersurfaces are embedded -- they are themselves manifolds with corners
having no boundary faces identified in $M.$ As a result each boundary
hypersurface has a global defining function $0\le\rho\in\CI(M),$ vanishing
simply and precisely on $H.$ As morphisms we will take `smooth interior
b-maps' which is to say smooth maps in the usual sense $M_1\longrightarrow
M_2$ such that the pull-back of a boundary defining function for a boundary
hypersurface of $M_2$ is the product of powers of boundary defining
functions for hypersurfaces of $M_1$ (including the case that the pull-back
is strictly positive). Certainly all smooth diffeomorphisms are interior
b-maps. A smooth $G$ action on $X$ is required to be \emph{boundary free}
in the sense that
\begin{equation}
g\in G,\ H\in\cM_1(M)\Longrightarrow \text{ either }gH=H\Mor gH\cap H=\emptyset.
\label{EKR.11}\end{equation}

In fact the morphisms we are most concerned with here are fibre bundles,
which we call `fibrations'. In this compact context, these are simply the
surjective interior b-maps with surjective differentials. The implicit
function theorem applies to show that for such a map each point in the base
has an open neighbourhood $U$ with inverse image diffeomorphic to the
product $U\times Z$ with $Z$ a fixed (over components of the base) compact
manifold with corners with the map reducing to projection. Note that the
b-map condition is used here; without such an assumption the fibres can be
cut off by boundaries.

\begin{definition}\label{EKR.7} The category, $\IMan,$ has as objects,
  $X_*,$ interated spaces in the following sense. There is a `principal'
  manifold with corners $X=X_0$ which is the root of a tree $X_\alpha$ of
  manifolds corresponding to a partial order (`depth') $\alpha \le\beta\in
  A.$ The boundary hypersurfaces of $X_0$ are partitioned into subsets,
  with elements which do not intersect, forming `collective boundary
  hypersurfaces' $H_\alpha (X_0)\subset \cM_1(X_0).$ These carry fibrations
\begin{equation}
\psi_\alpha :H_\alpha (X_0)\longrightarrow X_\alpha .
\label{EKR.8}\end{equation}
Under the partial order on the $H_\alpha$ two (always collective)
hypersurfaces are related if and only if they intersect and any collection
with non-trivial total intersection forms a chain. For each $\alpha$ the
set of boundary hypersurfaces of $X_\alpha$ is also partitioned into
collective boundary hypersurfaces
\begin{equation}
H_\beta (X_\alpha )=\psi_\alpha (H_\beta ),\ \beta>\alpha 
\label{EKR.9}\end{equation}
and $\psi_\beta$ restricted to $H_\alpha$ factors through a fibration 
\begin{equation}
\psi_{\beta ,\alpha }:H_\beta (X_\alpha )\longrightarrow X_\beta,\ \beta >\alpha ;
\label{EKR.10}\end{equation}
with the base index denoted $0,$ $\psi_\alpha =\psi_{\alpha ,0}.$

A smooth $G$-action on an iterated space is a boundary free $G$ action on each $X_\alpha$
with respect to which all the fibrations $\psi_{\alpha ,\beta}$ are $G$-equivariant.
 \end{definition}

It follows that in an iterated space, for any chain
\begin{equation}
\alpha _1<\alpha _2<\cdots<\alpha _k
\label{Res.3}\end{equation}
there is a sequence of fibrations
\begin{equation}
\xymatrix{
\displaystyle{\bigcap_{1\le j\le k}}H_{\alpha _j}(X_0)
\ar[r]^-{\psi_{\alpha _1}}&
\displaystyle{\bigcap_{2\le k\le k}}H_{\alpha _j}(X_{\alpha _1})
\ar[r]^-{\psi_{\alpha _{2},\alpha _1}}&
\cdots
\ar[r]&
H_{\alpha _k}(X_{\alpha _{k-1}})
\ar[r]^-{\psi_{\alpha _k,\alpha ,{k-1}}}&
X_{\alpha _k}
}
\label{Res.4}\end{equation}
with composite the restriction of $\psi_{\alpha _k}.$ It is also follows
that the fibres of the restricted fibrations have strictly increasing
codimension as submanifolds of the fibres in the hypersurfaces.

Resolution is accomplished in \cite{MR2560748} by radial blow up (which
corresponds to a sequence of interior b-maps) of successive smooth centres
corresponding to the tree of isotropy types, in (any) order of decreasing
codimension. This results in a well-defined iterated space, $X_*,$ with
$G$-action in the sense described above with principal space $X=X_0$
and iterated blow-down map
\begin{equation}
\beta :X\longrightarrow M
\label{E-K-n.335}\end{equation}
giving the resolution of $M.$ The $X_\alpha$ are the resolutions of the
isotropy types $\overline{M_\alpha}$ in the same sense. The important property of the
resolution is that the $G$-action on each (smooth, compact) $X_\alpha$ now
has fixed isotropy type and the `change of isotropy type' occurs within the
fibrations $\psi_{\alpha ,\beta }.$ 

Since the action on each $X_\alpha $ has fixed isotropy type the quotients 
\begin{equation}
Y_\alpha=X_\alpha /G
\label{EKR.12}\end{equation}
are all smooth manifolds with corners having boundary hypersurfaces $H_\beta
(Y_\alpha),$ $\beta >\alpha,$ labelled by the index set 
\begin{equation}
A_\alpha =\{\beta \in A;\beta >\alpha \}
\label{EKR.14}\end{equation}
and forming a tree with the corresponding intersection relations and base
$\alpha .$ The $G$-equivariant fibrations \eqref{EKR.10} descend to give
$Y_*$ the structure of an iterated space
\begin{equation}
\xymatrix{
H_\beta  (X_\alpha )
\ar[r]^{/G}\ar[d]_{\psi_{\beta ,\alpha }}&
H_\beta  (Y_\alpha )
\ar[d]^{\phi_{\beta,\alpha }}\\
X_\beta 
\ar[r]_{/G}&
Y_\beta  
},\ \beta >\alpha,\ \phi_\alpha =\phi_{\alpha ,0}.
\label{Res.6}\end{equation}

\section{Lifting}\label{Lif}

Let $\Bun(M)$ denote the category of finite-dimensional, smooth, complex,
vector bundles over a compact manifold $M,$ with bundle maps as
morphisms. Similarly if $M$ is a smooth $G$-space let $\Bun_G(M)$ denote
the category of bundles with equivariant $G$-action covering the action on
$M$ and with morphisms the bundle maps intertwining the actions. Thus the
equivariant K-theory of $M$ can be realized (see Segal \cite{MR0234452}) as
the Grothendieck group
\begin{equation}
K_G(M)=\Bun_G(M)\ominus\Bun_G(M)/\simeq
\label{EKR.1}\end{equation}
with the relation of stable $G$-equivariant bundle isomorphism. 

In general if $F:M\longrightarrow N$ is a smooth $G$-equivariant map of
$G$-spaces then pull-back defines a functor 
\begin{equation}
F^*:\Bun_G(N)\longrightarrow \Bun_G(M).
\label{EKR.2}\end{equation}
In particular this applies to the blow-down map in the resolution of the
action.

\begin{definition}\label{EKR.3} If $X_*$ is an iterated space we denote by
  $\Bun(X_*)$ the category with objects `iterated bundles' consisting of a
  bundle $B_\alpha\in\Bun(X_\alpha )$ for each $\alpha \in A$ and with pull-back
  isomorphisms specified over each $H_\alpha (X_0),$
\begin{equation}
\mu_\alpha :\phi_\alpha ^*B_\alpha \simeq B_0\big|_{H_\alpha(X_0)}
\label{EKR.4}\end{equation}
which factor through intermediate bundle isomorphisms $\mu_{\alpha,\beta},$
$\alpha <\beta,$ covering the sequence \eqref{Res.4} over each boundary
face of $X_0.$ The morphisms are bundle maps between the corresponding
bundles which commute with the connecting morphisms \eqref{EKR.4}.

If $X_*$ is an iterated space with $G$-action, $\Bun_G(X_*)$ denotes the
category in which the bundles carry $G$-actions covering the actions on the
$X_\alpha$ and the connecting isomorphisms, \eqref{EKR.4}, are
$G$-equivariant; morphisms are then required to be $G$-equivariant.
\end{definition}

\begin{lemma}\label{EKR.5} If the iterated $G$-space $X_*$ is the resolution
  of $M,$ with compact $G$ action, then pull-back under the iterated blow-down map
  defines a functor 
\begin{equation}
\beta^*:\Bun_G(M)\longrightarrow \Bun_G(X_*)
\label{EKR.6}\end{equation}
and every iterated bundle in $\Bun_G(X_*)$ is isomorphic to the image of a
bundle in $\Bun_G(M).$ 
\end{lemma}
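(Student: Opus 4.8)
The plan is to establish the two assertions separately: first that $\beta^*$ is a well-defined functor into $\Bun_G(X_*)$, and second that it is essentially surjective up to isomorphism. For the first part, I would start with an equivariant bundle $B \in \Bun_G(M)$ and set $B_\alpha = (\beta_\alpha)^* B$, where $\beta_\alpha : X_\alpha \to \overline{M_\alpha}$ is the blow-down map resolving the stratum closure. The $G$-equivariance of each $\beta_\alpha$ (guaranteed by the construction in \cite{MR2560748}) makes each $B_\alpha$ a $G$-equivariant bundle. The connecting isomorphisms $\mu_\alpha$ of \eqref{EKR.4} should arise because the blow-down maps are compatible with the fibrations $\psi_\alpha$: over a collective boundary hypersurface $H_\alpha(X_0)$, the composite $\beta_0|_{H_\alpha} $ factors, up to the fibration $\psi_\alpha$, through $\beta_\alpha$, so that $\phi_\alpha^* B_\alpha$ and $B_0|_{H_\alpha(X_0)}$ are both naturally identified with the pull-back of $B$ to $H_\alpha(X_0)$. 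Checking that these identifications factor through the intermediate $\mu_{\alpha,\beta}$ over the chain \eqref{Res.4} is a diagram-chase using the tower structure of the resolution, and their $G$-equivariance is automatic since every map in sight is equivariant. Functoriality is then formal: a morphism $B \to B'$ pulls back stratum-by-stratum and commutes with the $\mu$'s because pull-back is functorial.

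\emph{Essential surjectivity} is the substantive half. Given an iterated bundle $\{B_\alpha, \mu_\alpha\} \in \Bun_G(X_*)$, I want to produce $B \in \Bun_G(M)$ with $\beta^* B \cong \{B_\alpha,\mu_\alpha\}$. The natural strategy is downward induction on the depth of the tree, reassembling $B$ on $M$ by pushing forward from the resolution. The key geometric input is that the blow-down $\beta_0 : X_0 \to M$ is a diffeomorphism over the interior (the principal stratum), so $B_0$ already determines a bundle there; the problem is to extend this across the blown-up boundary hypersurfaces in a way compatible with the group action. I would argue that the compatibility data $\mu_\alpha$ is exactly what allows the bundle $B_0$ to descend through the collapsing $\psi_\alpha : H_\alpha(X_0) \to X_\alpha$: on each boundary hypersurface the bundle $B_0|_{H_\alpha(X_0)}$ is, via $\mu_\alpha$, pulled back from $X_\alpha$, hence constant along the fibres of $\psi_\alpha$, which is precisely the gluing condition needed for it to push down under $\beta$. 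Proceeding inductively from deepest strata to the root, and invoking the local product structure of the resolution near each $H_\alpha$, one reconstructs a genuine equivariant bundle on $M$.

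\textbf{The main obstacle} I expect is verifying the descent rigorously near corners, where several collective hypersurfaces $H_{\alpha_1}, \dots, H_{\alpha_k}$ meet along a chain \eqref{Res.3}. At such a corner the bundle must simultaneously be constant along the fibres of each $\psi_{\alpha_j}$, and the various identifications $\mu_{\alpha_j}$ must be mutually coherent; this is the role of the factorization of $\mu_\alpha$ through the intermediate isomorphisms $\mu_{\alpha,\beta}$ in Definition~\ref{EKR.3}. The technical work is to show that these compatibilities, which live on the fibres of the sequence \eqref{Res.4}, assemble into a single well-defined identification on the corner, so that the pushed-down transition data satisfy the cocycle condition. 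I would handle this by a collar/tubular-neighbourhood argument: choose $G$-invariant collar structures near each $H_\alpha$ (using that the action is boundary-free, \eqref{EKR.11}), trivialize along the collapsing directions using the $\mu_\alpha$, and patch. Once descent is established stratum-by-stratum, the equivariance of the result follows from the equivariance of all the $\mu_\alpha$, and the identification $\beta^*B \cong \{B_\alpha,\mu_\alpha\}$ is then a matter of unwinding the construction.
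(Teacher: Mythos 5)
Your overall strategy coincides with the paper's: the functor is just iterated pull-back, and the substance is essential surjectivity, handled by normalizing the bundle near the boundary using $G$-invariant collars and then pushing down. The first half of your argument is fine as written.

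In the surjectivity half there is, however, a gap at exactly the point you flag as ``the technical work.'' You pass from ``$B_0\big|_{H_\alpha(X_0)}$ is, via $\mu_\alpha$, pulled back from $X_\alpha$'' to ``hence constant along the fibres of $\psi_\alpha$,'' but \eqref{EKR.4} only gives an \emph{isomorphism} $\phi_\alpha^*B_\alpha\simeq B_0\big|_{H_\alpha(X_0)}$, not an equality; a bundle isomorphic to a pull-back need not itself be constant along the fibres, so it carries no descent data until you have modified $B_0$ within its isomorphism class to make $\mu_\alpha$ the identity. The paper's proof supplies the device you are missing: given $V$ over a manifold with boundary, $U$ over the boundary $H$, and an isomorphism $T:V|_H\to U$, one replaces $V$ by an isomorphic bundle $\tilde V$ whose fibres over $H$ are literally those of $U$, by a rotation in the isomorphism bundle of $V\oplus U$ supported in a collar. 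The crucial feature, which is what makes your corner/coherence worry go away, is that this construction leaves the bundle unchanged over any set where $T$ is already the identity; this permits the connecting isomorphisms to be removed \emph{inductively over the isotropy tree} without destroying the normalizations already achieved at deeper corners (using the rigidified, mutually commuting collar structure from \cite{MR2560748}). Only after all the $\mu$'s are reduced to identities can one straighten the bundles in equivariant collars to be genuinely constant along the normal fibrations and so recognize the iterated bundle as the pull-back of a ($G$-equivariant, possibly only topological and then smoothed) bundle on $M$. Without this normalization step your ``descend through $\psi_\alpha$ and patch'' plan does not yet produce well-defined transition data on $M$.
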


\begin{proof} The lifting of the objects, $G$-equivariant bundles, and
  corresponding morphisms under $\beta$ is simply iterated pull-back.  It
  only remains to show that every $G$-equivariant iterated bundle in
  $\Bun_c(X_*)$ is isomorphic to such a pull-back. As shown in
  \cite{MR2560748} the resolution $X_*$ can be `rigidified' by
  choosing product decompositions near all boundary hypersurfaces with
  $G$-invariant smooth defining functions consistent near all corners,
  i.e.\ so that the various retractions commute.

In the simple setting of a compact manifold with boundary, $M,$ suppose $V$
is a smooth vector bundle over $M,$ $U$ is a vector bundle over the
boundary $H$ and $T:V\big|_{H}\longrightarrow U$ is a bundle
isomorphism. Then $V$ can be modified near $H$ to an isomorphic bundle
$\tilde V$ which has fibres over $H$ identified with those of $U$ and
outside a small collar neighbourhood of $H$ has fibres identified with $V.$
This can be accomplished by a rotation in the isomorphism bundle of
$V\oplus U$ and in particular carries over to the equivariant case. Indeed
the standard construction has the virtue of leaving the original bundle
unchanged over any set in the collar over an open set on which $T$ is
already an identification. This allows the bundle isomorphisms to be
`removed' inductively over the isotropy tree.

Once the isomorphisms are reduced to the identity the bundles themselves
can be similarily modified in equivariant collars around the boundary
hypersurfaces of $X_0$ to be constant along the normal fibrations and hence
to be the pull-backs of smooth bundles on the base. Alternatively the
topological bundles obtained by direct projection can be smoothed over $M.$
\end{proof}

Pulling back a $G$-connection from a bundle on $M$ we find:

\begin{corollary}\label{EKR.61} For a $G$-bundle $W_*\in\Bun_G(X_*)$
there are a $G$-equivariant connection on each $W_\alpha$ which are
intertwined by the $\mu_{\alpha ,\beta}$ 
\end{corollary}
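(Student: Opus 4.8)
The plan is to deduce everything from Lemma~\ref{EKR.5} together with the functoriality of pull-back connections. By that lemma any $W_*\in\Bun_G(X_*)$ is isomorphic, through $G$-equivariant morphisms which by Definition~\ref{EKR.3} commute with the connecting isomorphisms, to $\beta^*V$ for some $V\in\Bun_G(M)$. Such an isomorphism transports a $G$-equivariant family of connections intertwined by the $\mu_{\alpha,\beta}$ to another family of the same type, so it suffices to produce the connections in the special case $W_*=\beta^*V$. First I would equip $V$ itself with a $G$-equivariant connection $\nabla$: starting from any connection on $V$ and averaging over $G$ with respect to Haar measure yields a connection invariant under the $G$-action, and this construction is available since $G$ is compact and acts smoothly on the compact manifold with corners $M$.

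With $\nabla$ fixed, recall that $X_\alpha$ is the resolution of the closed isotropy type $\overline{M_\alpha}$, with its own blow-down map $\beta_\alpha:X_\alpha\longrightarrow\overline{M_\alpha}$, and that the $\alpha$-component of $\beta^*V$ is $W_\alpha=\beta_\alpha^*(V\big|_{\overline{M_\alpha}})$. I would take as the connection on $W_\alpha$ the pull-back $\beta_\alpha^*(\nabla\big|_{\overline{M_\alpha}})$ of the restriction of $\nabla$ to the closed submanifold $\overline{M_\alpha}\subset M$. Each of these is $G$-equivariant because $\nabla$ is $G$-invariant and the $\beta_\alpha$ are $G$-equivariant.

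It remains to check that these connections are intertwined by the $\mu_\alpha$, and here I would use the geometry of the resolution from \cite{MR2560748}: the restriction of the blow-down $\beta$ to the collective boundary hypersurface $H_\alpha(X_0)$ factors as

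\begin{equation*}
H_\alpha(X_0)\xrightarrow{\ \psi_\alpha\ }X_\alpha\xrightarrow{\ \beta_\alpha\ }\overline{M_\alpha}\hookrightarrow M.
\end{equation*}

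Under this factorization the canonical identification $(\beta^*V)\big|_{H_\alpha(X_0)}\cong\psi_\alpha^*W_\alpha$ is precisely $\mu_\alpha$, and by functoriality of pull-back the connection $(\beta^*\nabla)\big|_{H_\alpha(X_0)}$ coincides with $\psi_\alpha^*\big(\beta_\alpha^*(\nabla\big|_{\overline{M_\alpha}})\big)$; that is, the $\mu_\alpha$ intertwine the chosen connections.

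The main point---the only part that is not pure functoriality---is verifying the displayed factorization of $\beta$ through $\psi_\alpha$ and its iterated refinements: one must know that along each chain \eqref{Res.3} the blow-down is compatible with the entire tower of fibrations \eqref{Res.4}, so that the intermediate isomorphisms $\mu_{\alpha,\beta}$ likewise intertwine the restrictions of $\beta^*\nabla$. This compatibility is built into the iterated structure of the resolution, and once it is recorded the intertwining over every boundary face of $X_0$ follows by the same functoriality argument applied stage by stage along \eqref{Res.4}.
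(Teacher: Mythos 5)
Your proposal is correct and follows essentially the same route as the paper, whose entire argument is the sentence ``Pulling back a $G$-connection from a bundle on $M$ we find:'' --- i.e.\ reduce via Lemma~\ref{EKR.5} to $W_*=\beta^*V$, equip $V$ with a $G$-invariant connection (by averaging, since $G$ is compact), and pull back through the blow-down. Your additional verification that the $\mu_{\alpha,\beta}$ intertwine the pulled-back connections, via the factorization of $\beta$ through the boundary fibrations, is exactly the implicit content of the paper's one-line proof, just made explicit.
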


Now, we can therefore identify
\begin{equation}
K_G(M)=\Bun_G(X_*)\ominus\Bun_G(X_*)/\simeq
\label{EKR.50}\end{equation}
as the Grothendieck group of iterated $G$-bundles on the resolution up to
stable isomorphism.

Finite dimensional representations of a compact Lie group, $G,$ can be
decomposed into direct sums of tensor products with respect to a fixed set
$\hG$ of irreducibles, which can be identified with the set of
characters. This allows the representation category to be identified with
the $\Bun_c(\hG)$ with objects the finitely supported `bundles' over $\hG$
and morphisms being bundle maps. Here, for a non-connected space, the
objects in $\Bun_c$ are permitted to have different dimensions over
different components but in this case, where there may be infinitely many
components, the bundles must have dimension $0$ outside a compact set. So
the objects consists of a (complex) vector spaces associated to a finite
number of characters. Each object in $\Bun_c(\hG)$ defines an equivariant
bundle over any $G$-space and tensor product with these bundles induces an
action of the representation ring, $R(G)=\hG(\bbZ)$ on $K_G(M).$ Aspects of
this action are particularly important in the sequel.

\begin{proposition}\label{E-K-n.423} For the action of a compact Lie
group on an iterated space $X_*,$ taking the tensor product with a
(finite-dimensional) representation gives a functor
\begin{equation}
\shf:\Bun(\hat G)\times \Bun_G(X_*)\longrightarrow \Bun_G(X_*).
\label{E-K-n.419}\end{equation}
\end{proposition}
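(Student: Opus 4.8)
The plan is to realize $\shf$ as the composite of two functors: first the inclusion of representations as \emph{trivial} iterated $G$-bundles, and then the tensor product of iterated $G$-bundles.

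First I would observe that an object $R\in\Bun(\hG)$, regarded as a finite-dimensional representation $V_R$ of $G$, determines a canonical object $\underline{R}_*\in\Bun_G(X_*)$. Over each $X_\alpha$ take the trivial bundle $X_\alpha\times V_R$ with the diagonal action $g\cdot(x,v)=(gx,gv)$, where $gv$ is the representation action; this is a smooth $G$-equivariant bundle precisely because $V_R$ is a representation. The connecting data is where equivariance of the fibrations enters: because each fibration \eqref{EKR.8} is $G$-equivariant, the pull-back $\psi_\alpha^*(X_\alpha\times V_R)$ is \emph{canonically and equivariantly} the constant bundle $(X_0\times V_R)\big|_{H_\alpha(X_0)}$, the two actions on the fibre both being through $V_R$. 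Taking these identifications as the $\mu_\alpha$, and their evident restrictions as the intermediate $\mu_{\alpha,\beta}$, makes $\underline{R}_*$ an iterated $G$-bundle, since along any chain \eqref{Res.3} the identifications are literally identities and hence compose coherently. A morphism $R\to R'$ in $\Bun(\hG)$ is an intertwining map $V_R\to V_{R'}$; tensoring it componentwise with identities gives a morphism $\underline{R}_*\to\underline{R'}_*$ commuting with the canonical $\mu_\alpha$, so $R\mapsto\underline{R}_*$ is a functor $\Bun(\hG)\to\Bun_G(X_*)$.

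Next I would construct the tensor product as a bifunctor $\otimes\colon\Bun_G(X_*)\times\Bun_G(X_*)\to\Bun_G(X_*)$. On objects $B_*,B'_*$ set $(B_*\otimes B'_*)_\alpha=B_\alpha\otimes B'_\alpha$ with the tensor-product $G$-action, and define the connecting isomorphisms by
\begin{equation*}
\psi_\alpha^*(B_\alpha\otimes B'_\alpha)=\psi_\alpha^*B_\alpha\otimes\psi_\alpha^*B'_\alpha
\xrightarrow{\ \mu_\alpha\otimes\mu'_\alpha\ }
(B_0\otimes B'_0)\big|_{H_\alpha(X_0)},
\end{equation*}
using that pull-back commutes with $\otimes$ up to canonical natural isomorphism. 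These are $G$-equivariant because each factor is, and they factor through the tensor products of the intermediate $\mu_{\alpha,\beta},\mu'_{\alpha,\beta}$ along \eqref{Res.4}; on morphisms one simply tensors the two families of bundle maps. Functoriality in each slot and the interchange law reduce to the corresponding properties of the tensor product of ordinary equivariant bundles, applied componentwise.

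Finally I would define $\shf(R,B_*)=\underline{R}_*\otimes B_*$ and $\shf(f,g)=\underline{f}\otimes g$; since this is the composite of the bifunctor $\otimes$ with the functor $R\mapsto\underline{R}_*$ in the first slot, it is automatically a functor on the product category, and finiteness is preserved because $V_R$ is finite-dimensional. The only point needing genuine care is the verification that the tensored connecting isomorphisms satisfy the chain and factorization condition of Definition~\ref{EKR.3} compatibly along every chain \eqref{Res.3}; this is the main, though routine, obstacle, and it follows from the coherence of the natural identifications $\psi^*(E\otimes F)\cong\psi^*E\otimes\psi^*F$ under composition of the equivariant fibrations, together with the fact that for the trivial factor $\underline{R}_*$ these identifications are the identity.
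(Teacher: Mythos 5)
Your proof is correct and follows essentially the same route as the paper: regard the representation as the trivial iterated $G$-bundle with the induced action and tensor it with the given iterated bundle, handling morphisms by tensoring with identities. The additional verification you supply of the coherence of the connecting isomorphisms $\mu_\alpha$ along chains is exactly what the paper leaves implicit.
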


\begin{proof} Given an element $(V,E)\in \Bun(\widehat{G})\times
  \Bun_G(X_*)$, the corresponding object in $\Bun_G(X_*)$ is the tensor
  product of $E$ and $V$, with $V$ thought as the trivial iterated bundle over
  $X_*$ with the implied $G$-action. Given an element $V$ of $\Bun(\hat G)$
  and an equivariant iterated bundle map $E\rightarrow F$, we obtain an
  equivariant bundle map $V\otimes E\rightarrow V\otimes F$ and similarly
  morphism of representations $V_1\rightarrow V_2$ and an equivariant
  iterated bundle $E$, we obtain an induced equivariant bundle map
  $V_1\otimes E\rightarrow V_2\otimes E.$

\end{proof}

\section{Reduction}\label{RedA}

The abelian case is considerably simpler than the general one and has been
more widely studied. From this point on, in this paper, we shall assume
that $G$ is compact and abelian. One fundamental simplification is that all
(complex) irreducible representations in the abelian case are
one-dimensional (and of course all 1-dimensional representations are
irreducible). In this case $\hG$ is a discrete abelian group.

As recalled in the Introduction, if a compact Lie group acts freely on a
compact manifold, $X,$ then the quotient, $Y,$ is a compact manifold and $X$ is
a principal bundle over it. For an equivariant bundle over $X$, the action
over each orbit gives descent data for the bundle, defining a vector bundle
over the base. This gives an equivalence of categories
\begin{equation}
\Bun_G(X)\cong \Bun(Y)\text{ if $G$ acts freely.}
\label{E-K-n.418}\end{equation}
For such a free action, tensor product with representations gives a
`quantization' of the dual group
\begin{equation}
\shf:\hG\longrightarrow\Bun(Y)
\label{EKR.23}\end{equation}
corresponding to \eqref{E-K-n.419}.

We need to understand this operation in the more general case of an action
with a fixed isotropy group $B\subset G,$ necessarily a closed
subgroup. There is then a short exact sequence
\begin{equation}
B\longrightarrow G\longrightarrow G/B
\label{E-K-n.427}\end{equation}
that is split since the groups are abelian. The dual sequence
\begin{equation}
 \xymatrix{
\widehat{G/B}\ar[r]& \widehat{G}\ar[r]& \ar@<5pt>[l]^{\tau}\widehat{B}
}
\label{E-K-n.430}\end{equation}
is also exact and split so there exists a group homomorphism $\tau$ as
indicated, giving a right inverse. Two such maps $\tau$, $\tau'$ are
related by a group homomorphism
\begin{equation}
 \mu:\widehat{B}\longrightarrow \widehat{G/B}.
\label{E-K-n.438}\end{equation}
with $\tau'=m(\mu,\tau)$, where $m:\hGB\times\hG\longrightarrow\hG$ is the
multiplication map.

For an action with isotropy group $B,$ the quotient $G/B$ acts freely on
$X$ and the discussion above gives the equivalence of categories and shift functor
\begin{multline}
\Bun_{G/B}(X)\cong\Bun (Y),\ \shf:\widehat{G/B}\longrightarrow
\Bun(Y),\\ Y=X/G=X/(G/B).
\label{E-K-n.439}\end{multline}

It is still the case that $G$-equivariant bundles descend to the quotient
but only after decomposition under the action of $B.$ Consider the space
$\hG\times Y,$ which has a natural action by $\widehat{G/B},$ with $\hat
g\times Y$ mapped to $(\hat h\otimes\hat g)\times Y.$

\begin{definition}\label{EKR.19} For a compact abelian $G$-action on a
  compact manifold $X$ with fixed isotropy group $B\subset G$ and base
  $Y=X/G,$ let $\Bunc B(\hat G\times Y)$ denote the category
  of bundles over $\hG\times Y$ with support which is finite when projected
  to $\hB$ and which satisfy the transformation law 
\begin{equation}
\shf(\hat h)\otimes W_{\hat h\otimes \hat g}=W_{\hat g}\ \forall\ \hat
h\in\widehat{G/B},\ \hat g\in\hG.
\label{EKR.20}\end{equation}
Morphisms are bundle maps over each $\hat g\times Y$ which are natural with
respect to \eqref{EKR.20}.
\end{definition}

Note that we could eliminate the action \eqref{EKR.20} at the expense of
choosing a splitting group homomorphism $\tau:\hB\longrightarrow \hG$ as in
\eqref{E-K-n.430}, reducing elements of $\Bunc B(\hG\times
Y)$ to arbitrary elements of $\Bun_c(\hB\times Y).$  

\begin{proposition}\label{E-K-n.424} For an action of a compact abelian group
with fixed isotropy group $B\subset G$ there is an equivalence of categories
\begin{equation}
R:\Bun_G(X)\cong \Bunc B(\hG\times Y),\ Y=X/G
\label{E-K-n.425}\end{equation}
where $W\in \Bun^B_c(\hG\times Y)$ corresponds to the
$G$-equivariant bundle 
\begin{equation}
\bigoplus_{\hat b\in\hB}\tau(\hat b)\otimes\pi^*(W_{\tau(\hat b)})
\label{EKR.24}\end{equation}
for a splitting homomorphism $\tau$ as in \eqref{E-K-n.430}.%
\end{proposition}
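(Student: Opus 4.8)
The plan is to build the equivalence from two ingredients already available: the $B$-isotypic decomposition of a $G$-equivariant bundle, and descent for the free residual action of $G/B$, equation \eqref{E-K-n.418}. The forward functor $R$ will decompose, twist, and descend; the inverse is given directly by \eqref{EKR.24}.

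First I would record the structural fact that makes the abelian case tractable. Since $G$ is abelian and the isotropy type is fixed, the isotropy group at \emph{every} point of $X$ is exactly $B$, so $B$ fixes each point and acts purely fibrewise on any $E\in\Bun_G(X)$. Compactness of $B$ then yields a smooth $B$-isotypic decomposition
\begin{equation*}
E=\bigoplus_{\hat b\in\hB}E_{\hat b},
\end{equation*}
in which $B$ acts on the subbundle $E_{\hat b}$ through the character $\hat b$; the sum is finite because $E$ is finite-dimensional, accounting for the finite-support condition. Because $G$ is abelian its action commutes with that of $B$ and hence preserves each $E_{\hat b}$, so every $E_{\hat b}$ is again a $G$-equivariant bundle over $X$.

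Next the forward functor. Lifting $\hat b$ to the character $\tau(\hat b)\in\hG$ of \eqref{E-K-n.430}, which restricts to $\hat b$ on $B$, the twisted bundle $\tau(\hat b)^{-1}\otimes E_{\hat b}$ carries a trivial $B$-action and is therefore equivariant for the free residual $G/B$-action. Applying \eqref{E-K-n.418} produces a bundle $W_{\tau(\hat b)}\in\Bun(Y)$, and I would \emph{define} $R(E)$ to be the object of $\Bunc B(\hG\times Y)$ whose values on $\tau(\hB)$ are these $W_{\tau(\hat b)}$, propagated across the cosets of $\hGB$ in $\hG$ by $W_{\hat h\otimes\tau(\hat b)}=\shf(\hat h)^{-1}\otimes W_{\tau(\hat b)}$ (here $\shf(\hat h)$ is the descended line bundle of \eqref{EKR.23}). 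By construction this is the transformation law \eqref{EKR.20}; using that $\shf$ is multiplicative one checks that $R(E)$ is independent of the splitting, since replacing $\tau$ by $\tau'=m(\mu,\tau)$ shifts the coset representative by $\mu(\hat b)$ while simultaneously twisting the descended bundle by $\shf(\mu(\hat b))$, and the two cancel. A $G$-equivariant bundle map respects the isotypic splitting, so descends componentwise to a map compatible with \eqref{EKR.20}, giving functoriality on morphisms.

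For the inverse I would take \eqref{EKR.24}, sending $W$ to $\bigoplus_{\hat b}\tau(\hat b)\otimes\pi^*W_{\tau(\hat b)}$ with $\pi\colon X\to Y$ the quotient: this is $G$-equivariant with $B$ acting on the $\hat b$-summand through $\hat b$, and \eqref{EKR.20} makes its isomorphism class independent of $\tau$. That the two functors are mutually inverse is then a direct check via \eqref{E-K-n.418}: the $\hat b$-summand of \eqref{EKR.24} built from $R(E)$ is $\tau(\hat b)\otimes\pi^*(\text{descent of }\tau(\hat b)^{-1}\otimes E_{\hat b})\cong E_{\hat b}$, and summing recovers $E$; conversely the $B$-isotypic component of \eqref{EKR.24} at $\hat b$ is $\tau(\hat b)\otimes\pi^*W_{\tau(\hat b)}$, whose twist-and-descent returns $W_{\tau(\hat b)}$. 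The main obstacle I anticipate is not a hard estimate but the bookkeeping showing that the isotypic pieces are genuinely smooth $G$-bundles of locally constant rank and that componentwise descent assembles into a single intrinsic object satisfying \eqref{EKR.20}, rather than merely a $\tau$-dependent family in $\Bun_c(\hB\times Y)$; this is precisely where the multiplicativity of the shift functor \eqref{EKR.23} does the essential work.
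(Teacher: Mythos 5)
Your proposal is correct and follows essentially the same route as the paper: decompose into $B$-isotypic components, twist by a character of $G$ restricting to $\hat b$ so the $B$-action becomes trivial, and descend along the free $G/B$-action, with \eqref{EKR.24} as the inverse. The only cosmetic difference is that the paper defines $W_{\hat g}$ directly for every $\hat g\in\hG$ restricting to $\hat b$ (so the transformation law \eqref{EKR.20} and $\tau$-independence are automatic), whereas you define it on $\tau(\hB)$ and propagate across cosets, then verify independence of the splitting.
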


\noindent Elements of $\Bun^B_c(\hG\times Y)$ are our `reduced bundles' in
 this simple case.

In order to define \ref{EKR.24} we to pass to the restriction of an element
of $\Bun^B_c(\hG\times Y)$ to the image $\tau(\hB)\times U$ given by a
splitting homomorphism $\tau.$ As a consequence of \eqref{EKR.20} the result
is independent of the choice of $\tau.$

\begin{proof} The isotropy group acts on the fibres of an equivariant
  bundle $U\in\Bun_G(X)$ which therefore decomposes into a direct sum of
  $B$-equivariant bundles
\begin{equation}
U=\bigoplus_{\hat b\in\hB,\text{finite}}U_{\hat b}
\label{EKR.22}\end{equation}
where the action of $B$ on each term factors through the irreducible
representation $\hat b.$ If $\hat g\in\hG$ is a representation which
restricts to $\hat b$ then the action of $B$ on $\hat g^{-1}\otimes U_{\hat
  b}$ is trivial. This bundle therefore has an equivariant $G/B$-action and
so descends to a bundle $W_{\hat g}$ over $Y.$ Doing this for every $\hat
g$, we define a bundle over $\hG\times Y$, supported over a finite subset
of $\hB.$ Clearly these bundles satisfy \eqref{EKR.20}. Conversely each
element of $\Bun^B_c(\hG\times Y)$ defines an element of $\Bun_G(X).$
\end{proof}

Note that the category $\Bunc B(\hG \times Y)$ is not
determined by the groups and base $Y$ alone since it depends on the `shift'
isomorphism $\shf$ which retains some information about the principal
bundle, namely the images under descent to $Y$ of the trivial $G$-bundles
given by elements of $\widehat{G/B}.$

\section{Reduced K-theory}\label{EquA}

Consider next a principal $G$-bundle, for $G$ compact abelian, and a
$G$-equivariant fibration giving a commutative diagram
\begin{equation}
\xymatrix{
X\ar[r]^{\pi}\ar[d]_{/G}&X_1\ar[d]^{/G}\\
Y\ar[r]_{\pi_1}&Y_1}
\label{E-K-n.432}\end{equation}
where the $G$-action on $X_1$ has fixed isotropy group $B;$ thus $\pi_1$ is
a fibration of smooth compact manifolds. In view of the identification of
equivariant bundles in Proposition~\ref{E-K-n.424}, the pull-back map
descends to an `augmented pull back map'
\begin{equation}
\xymatrix{
\Bun_G(X)\ar@{^(->}[d]&\ar[l]_-{\pi^*}\Bun_G(X_1)\ar@{^(->}[d]\\
\Bun(Y)&\ar[l]_-{\pi_1^{\#}}\Bun^B_c(\hG\times Y_1)
}
\label{E-K-n.433}\end{equation}
given by pull-back followed by summation over a splitting $\tau:\hB\longrightarrow \hG:$
\begin{multline}
\pi_1^{\#}:\Bun^B_c(\hG\times Y_1)\overset{\pi_1^*}\longrightarrow\Bun^B_c(\hG\times
Y)\overset{\shf^{\#}}\longrightarrow \Bun(Y)=\Bun^{\{e\}}_c(\hG\times Y),\\
\shf^{\#}(V)=\bigoplus_{\hat b\in\hB}\shf(\tau(\hat
  b))V_{\tau(\hat b)}.
\label{E-K-n.434}\end{multline}
As implicitly indicated by the notation, $\shf^{\#}(V)$ is independent of
the section $\tau.$

We need this in the more general case of an equivariant fibration between
two actions with fixed isotropy groups. For nested closed subgroups,
$K\subset B\subset G,$ we choose iterated splittings
\begin{equation}
\tau':\widehat{K}\longrightarrow \widehat{B},\
\tau_1:\widehat{B}\longrightarrow \widehat{G}\Longrightarrow
\tau=\tau_1\tau':\widehat{K}\longrightarrow \widehat{G}.
\label{E-K-n.441}\end{equation}
Then 
\begin{equation}
\hat b',\ \hat b\in\widehat{B},\ \hat b'\big|_{K}=\hat
b\Longleftrightarrow\ \exists\ !\ \hat h\in\widehat{G/K}\Mst
\tau_1(\hat b')=\hat h\tau(\hat b). 
\label{EKR.26}\end{equation}
\begin{proposition}\label{E-K-n.435} If \eqref{E-K-n.432} is an equivariant
  fibration between actions of a compact abelian Lie group $G$ with fixed
  isotropy groups $B\supset K$ then pull back of equivariant
  bundles descends to the agumented pull-back map 
\begin{equation}
\pi_1^{\#}:\Bun^B_c(\hG\times Y_1)\longrightarrow
\Bun^K_c(\hG\times Y)
\label{E-K-n.436}\end{equation}
given by pull back on the fibres 
\begin{equation*}
\pi_1^*:\Bun^B_c(\hG\times
Y_1)\longrightarrow \Bun^B_c(\hG\times Y)
\label{EKR.25}\end{equation*}
followed by summation to give the value at the image $\tau(\hat k)$
using \eqref{EKR.26}
\begin{equation}
\begin{gathered}
\left(\shf^{\#}(V)\right)_{\tau(\hat k)}=\bigoplus
\limits_{\{\hat b\in\widehat{B};\hat b\big|_{K}=\hat k\}}
\shf(\hat h)V_{\hat h\tau'(\hat k)},\\ 
\forall\ V\in\Bun^B_c(\hG\times Y),\
\hat k\in\hK.
\end{gathered}
\label{E-K-n.442}\end{equation}
\end{proposition}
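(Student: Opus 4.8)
The plan is to reduce Proposition~\ref{E-K-n.435} to the already-established special case. The key observation is that the general statement, for the nested pair $K\subset B\subset G$, factors as a composition of two simpler operations: an honest pull-back on the fibres of $\pi_1$, which is the identity-like part, followed by an algebraic summation that implements the change of isotropy type from $B$ down to $K$. First I would verify that fibrewise pull-back $\pi_1^*$ indeed maps $\Bunc B(\hG\times Y_1)$ to $\Bunc B(\hG\times Y)$: the transformation law \eqref{EKR.20} involves only the $\hG$-variable and the shift functor $\shf$, both of which are natural under the equivariant map $\pi$, so the twisting is preserved and the projected support to $\hB$ remains finite because $\pi_1$ is a fibration with compact fibres. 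This step is essentially functoriality of the construction in Proposition~\ref{E-K-n.424} and should be routine.

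The substance lies in the second arrow $\shf^{\#}$, which converts a $B$-twisted bundle into a $K$-twisted one. Here I would work over the image $\tau(\hK)\subset\hG$ of the iterated splitting \eqref{E-K-n.441} and use the bijection \eqref{EKR.26}: given $\hat k\in\hK$, the characters $\hat b\in\hB$ restricting to $\hat k$ are exactly parametrized, via $\tau_1(\hat b')=\hat h\,\tau(\hat b)$, by elements $\hat h\in\hGK$, and summing the corresponding fibres $V_{\hat h\tau'(\hat k)}$ after applying $\shf(\hat h)$ yields the formula \eqref{E-K-n.442}. The plan is to check that this assignment genuinely lands in $\Bunc K(\hG\times Y)$: one must confirm both that the resulting bundle satisfies the $\hGK$-transformation law \eqref{EKR.20} for the smaller group and that its support projects to a finite subset of $\hK$. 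The finiteness is inherited from the $B$-finiteness of $V$ together with the finiteness of the fibre of $\hB\to\hK$ relevant to the sum; the transformation law should follow by a direct manipulation, translating a shift in the $\hGK$-variable into a reindexing of the summation governed by \eqref{EKR.26}.

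The main obstacle, and the point deserving the most care, is the claim of independence from the splittings. The maps $\tau'$, $\tau_1$, and hence $\tau=\tau_1\tau'$ are not canonical, and a different choice differs by homomorphisms of the form \eqref{E-K-n.438}. I would show that changing $\tau$ induces on $\shf^{\#}(V)$ precisely the reindexing that is absorbed by the transformation law \eqref{EKR.20}, so that the two outputs agree as objects of $\Bunc K(\hG\times Y)$ rather than merely being abstractly isomorphic. Concretely, the compatibility of the shift functor with multiplication in $\hG$ — namely $\shf(\hat h_1\hat h_2)\cong\shf(\hat h_1)\otimes\shf(\hat h_2)$ coming from \eqref{EKR.23} and the free $G/K$-action — is what makes the ambiguity cancel; this is the analogue of the independence already noted for $\shf^{\#}$ in \eqref{E-K-n.434}, now in the iterated setting.

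Finally, to see that $\pi_1^{\#}$ really is \emph{descended} from pull-back of genuine $G$-equivariant bundles, I would invoke the equivalence $R$ of Proposition~\ref{E-K-n.424} at both isotropy groups and verify that the square
\begin{equation*}
\xymatrix{
\Bun_G(X_1)\ar[r]^{\pi^*}\ar[d]_{R}&\Bun_G(X)\ar[d]^{R}\\
\Bunc B(\hG\times Y_1)\ar[r]_{\pi_1^{\#}}&\Bunc K(\hG\times Y)
}
\end{equation*}
commutes. Unwinding the direct-sum description \eqref{EKR.24} on the top-right, the $B$-isotypic decomposition \eqref{EKR.22} of $\pi^*U$ refines the $K$-isotypic one exactly according to the fibres of $\hB\to\hK$, and matching summands against \eqref{E-K-n.442} gives the claim. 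I expect this last verification to be bookkeeping once the independence of splittings is settled, so that the genuine difficulty is concentrated in the well-definedness argument of the previous paragraph.
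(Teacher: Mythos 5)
Your proposal is correct, and it follows the same underlying two-step decomposition as the paper (fibrewise pull-back preserving the isotropy group, followed by the summation implementing the change from $B$ to $K$), but it organizes the verification differently. The paper's proof is essentially geometric: it factors the equivariant fibration through the fibre product $\tilde X_1=X_1\times_{Y_1}Y$, so that $X\longrightarrow \tilde X_1\longrightarrow X_1$ reduces the whole statement to two elementary cases --- a fibration with unchanged isotropy group (where the augmented pull-back is just $\pi_1^*$ and \eqref{E-K-n.442} is the identity) and a change of isotropy group over a fixed base (where $\pi_1^*$ is the identity and only the summation over $\{\hat b\in\hB:\hat b|_K=\hat k\}$ remains). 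You instead verify the composite formula directly and algebraically: checking that $\pi_1^*$ lands in $\Bunc B(\hG\times Y)$, that $\shf^{\#}$ lands in $\Bunc K(\hG\times Y)$, that the result is independent of the splittings $\tau',\tau_1$ via the multiplicativity of $\shf$, and that the square with the reduction equivalence $R$ of Proposition~\ref{E-K-n.424} commutes. What the paper's factorization buys is brevity --- each elementary case is nearly tautological; what your route buys is an explicit treatment of the well-definedness under change of splitting and of the compatibility with $R$, both of which the paper asserts (``as implicitly indicated by the notation'') but does not spell out. One small quibble: the finiteness of the projected support under $\pi_1^*$ has nothing to do with compactness of the fibres of $\pi_1$ --- pull-back simply does not enlarge the support in the $\hB$-variable.
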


\begin{proof} An equivariant fibration can be factored through the fibre
  product 
\begin{equation*}
\tilde X_1=X_1\times_{Y_1}Y\longrightarrow Y,\ X\longrightarrow \tilde
X_1\longrightarrow X_1
\label{EKR.56}\end{equation*}
where the $G$-action on $\tilde X_1$ has isotropy group $B.$ Thus it
suffices to consider the two cases of the pull-back of an action under a
fibration and the quotient of an action with isotropy group $K$ by a larger
subgroup $B.$ In the first case the augmented pull-back is simply the
pull-back as in \eqref{EKR.25} with \eqref{E-K-n.442} being the
  identity. In the second case the base is unchanged, so \eqref{EKR.25} is
  the identity and the summation is over those elements of $\hB$ with fixed
  restriction to $K.$ 
\end{proof}

Now we pass to the general case of the action of a compact abelian Lie
group $G$ on a compact manifold $M$ with resolution $X_*$ and resolved
quotient $Y_*$ as discussed above. The isotropy groups $B_\alpha \subset G$
form a tree with root $B_0$ the principal isotropy group. Generalizing the
choice \eqref{E-K-n.441} we can choose iterative splittings by proceeding
stepwise along chains
\begin{equation}
\tau_{\beta,\alpha }:\widehat{B_\alpha}\longrightarrow
\widehat{B_\beta}\ \forall\ \beta >\alpha ,\
\tau_{\gamma ,\beta }\circ\tau_{\beta,\alpha }=\tau_{\gamma ,\alpha },\ \gamma >\beta >\alpha .
\label{E-K-n.444}\end{equation}
Using notation as for the fibration maps we set $\tau_\alpha =\tau_{\alpha,0}.$
Then the formul\ae\ \eqref{E-K-n.436} and \eqref{E-K-n.442} are valid
for any pair and are consistent along chains.

\begin{definition}\label{EKR.34} Reduced bundles $W_*$ in the case of an
abelian action, consist of the following data
\begin{enumerate}
\item A bundle $W_\alpha \in\Bunc{B_\alpha }(\hG\times Y_*)$ for each element of the tree.
\item For each non-principal isotropy type $\alpha>0$ (so $B_\alpha \supset
  B_0)$ a bundle isomorphism 
\begin{equation}
T_\alpha :\pi_\alpha ^{\#} W_\alpha \simeq W_0\big|_{H_\alpha (Y_0)}.
\label{E-K-n.446}\end{equation}
\item The consistency conditions that for any chain $\alpha _*,$ $\alpha
_k>\cdots>\alpha_1>0$ the isomorphisms \eqref{E-K-n.446} restricted to the
boundary face, of codimension $k,$ 
\begin{equation*}
H_{\alpha _*}(Y_0)=\bigcap_{j}H_{\alpha _j}(Y_0)
\label{EKR.27}\end{equation*}
form a chain, corresponding to isomorphisms for each $\alpha <\beta$
\begin{equation}
T_{\alpha,\beta }:\pi_{\alpha\beta } ^{\#}W_\beta \simeq W_\alpha  \big|_{H_\beta (Y_\alpha)}.
\label{E-K-n.447}\end{equation}
\end{enumerate}

Morphisms between such data consist of bundle maps at each level of the tree
intertwining the isomorphisms $T_\alpha$ in \eqref{E-K-n.446}.
\end{definition}

We denote by $\Bun^{B_*}_c(\hG\times Y_*)$ the category of such
  reduced bundles and the corresponding Grothendieck group of pairs of
  reduced bundles up to stable isomorphism by
\begin{equation}
K_{\red}(Y_*)=\Bun^{B_*}_c(\hG\times
    Y_*)\ominus\Bun^{B_*}_c(\hG\times Y_*)/\simeq.
\label{EKR.51}\end{equation}

\begin{theorem}\label{abelianTh} The equivariant K-theory for the action of
  a compact abelian group on a compact manifold $M$ is naturally identified
  with the reduced K-theory \eqref{EKR.51} of the resolved quotient.
\end{theorem}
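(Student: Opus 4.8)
The plan is to establish the identification by combining the two main tools already assembled in the paper: the lifting equivalence of Lemma~\ref{EKR.5}, which realizes $K_G(M)$ in terms of iterated $G$-bundles on the resolution $X_*$, and the fibrewise reduction of Proposition~\ref{E-K-n.424}, which on each stratum $X_\alpha$ converts $G$-equivariant bundles (with fixed isotropy $B_\alpha$) into reduced bundles in $\Bunc{B_\alpha}(\hG\times Y_*)$. The heart of the matter is to check that these two equivalences are compatible with the connecting structure on both sides -- the pull-back isomorphisms $\mu_\alpha$ of \eqref{EKR.4} on iterated bundles, and the isomorphisms $T_\alpha$ of \eqref{E-K-n.446} on reduced bundles -- so that the stratum-by-stratum reductions glue into a single functor $\Bun_G(X_*)\cong \Bun^{B_*}_c(\hG\times Y_*)$ of the whole iterated categories.

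First I would fix the iterative splittings $\tau_{\beta,\alpha}$ as in \eqref{E-K-n.444}, which are exactly the coherent choices needed to make the augmented pull-back maps $\pi_\alpha^{\#}$ of Proposition~\ref{E-K-n.435} consistent along chains. Then, for each $\alpha\in A$, I would apply the equivalence $R$ of Proposition~\ref{E-K-n.424} to the stratum $X_\alpha$ with its fixed isotropy group $B_\alpha$, sending the component bundle $B_\alpha\in\Bun_G(X_\alpha)$ of a given iterated $G$-bundle to a reduced bundle $W_\alpha\in\Bunc{B_\alpha}(\hG\times Y_\alpha)$. The key verification is that the connecting isomorphism $\mu_\alpha:\phi_\alpha^*B_\alpha\simeq B_0\big|_{H_\alpha(X_0)}$ of the iterated bundle, which is $G$-equivariant over the fibration $\psi_\alpha$, is carried by $R$ precisely to an isomorphism $T_\alpha:\pi_\alpha^{\#}W_\alpha\simeq W_0\big|_{H_\alpha(Y_0)}$ of the required form \eqref{E-K-n.446}. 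Here the formula \eqref{E-K-n.442} for $\pi_1^{\#}$ -- pull-back on fibres followed by summation over characters of $B_\alpha$ restricting to a fixed character of $B_0$ -- is exactly what the fibrewise decomposition \eqref{EKR.22} produces, so the equivariant pull-back isomorphism descends to the augmented one. One then checks that the intermediate isomorphisms $\mu_{\alpha,\beta}$ along the factored sequence \eqref{Res.4} correspond under $R$ to the $T_{\alpha,\beta}$ of \eqref{E-K-n.447}, which is where the consistency of the splittings along chains is used.

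Having matched objects and their connecting data, I would verify functoriality on morphisms: a morphism of iterated $G$-bundles is a collection of $G$-equivariant bundle maps intertwining the $\mu_\alpha$, and since $R$ is an equivalence on each stratum and carries $\mu$'s to $T$'s, it sends such a collection to a collection of reduced bundle maps intertwining the $T_\alpha$, i.e.\ a morphism in $\Bun^{B_*}_c(\hG\times Y_*)$. An inverse functor is built the same way from the inverse equivalences on each stratum, using the explicit reconstruction \eqref{EKR.24}, and the chain-consistency of the $\tau_{\beta,\alpha}$ guarantees the reconstructed connecting isomorphisms are compatible across corners of codimension $k$. This yields an equivalence of the iterated categories, and passing to Grothendieck groups of pairs modulo stable isomorphism then gives $K_G(M)=K_{\red}(Y_*)$, upon combining with the identification \eqref{EKR.50}.

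The main obstacle I anticipate is the coherence over high-codimension corners: at a boundary face $H_{\alpha_*}(Y_0)=\bigcap_j H_{\alpha_j}(Y_0)$ of codimension $k$, the isomorphisms $T_{\alpha_j}$ must assemble into a genuine chain \eqref{E-K-n.447}, and one must check this is compatible with the nested splittings $\tau_{\beta,\alpha}$ and with the transformation law \eqref{EKR.20} simultaneously at every character $\hat g\in\hG$. This is really a bookkeeping argument -- tracking how the summation formula \eqref{E-K-n.442} interacts across the composition of fibrations in \eqref{Res.4} -- but it is the step where one must be careful that the independence of the augmented pull-back from the choice of section $\tau$, asserted after \eqref{E-K-n.434}, holds uniformly and compatibly along the whole isotropy tree rather than just for a single pair of strata.
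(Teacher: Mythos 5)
Your proposal is correct and follows essentially the same route as the paper: the paper's proof simply asserts that the theorem follows from the equivalence of categories between iterated $G$-bundles on $X_*$ (via Lemma~\ref{EKR.5} and \eqref{EKR.50}) and reduced bundles on $Y_*$, obtained by combining Propositions~\ref{E-K-n.424} and \ref{E-K-n.435}. Your write-up supplies the stratum-by-stratum details and the coherence checks over corners that the paper leaves implicit, but the underlying argument is identical.
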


\begin{proof} This follows from the equivalence of the categories of
  $G$-equivariant iterated bundles over $X_*$ and reduced bundles over
  $Y_*$ which in turn follows from Propositions~\ref{E-K-n.424} and \ref{E-K-n.435}.
  \end{proof}

\begin{definition}\label{EKR.59} An iterated connection $\nabla_*$ on a
  reduced bundle $W_*\in\Bunc{B_*}(\hG\times Y_*)$ is a connection
    $\nabla_{\hat g,\alpha}$ on each bundle $W_{\hat g,\alpha
    }\in\Bun(Y_\alpha)$ satisfying
\begin{equation}
\nabla_{\hat h}\otimes\nabla_{\hat h\hat g,\alpha}=\nabla_{\hat g,\alpha},\
\hat h\in\widehat{G/B_\alpha }\ \forall\ \alpha \in A,\ \hat g\in\hG
\label{EKR.60}\end{equation}
under the transformation law \eqref{EKR.20} and compatible under agumented
pull-back isomorphisms.

\end{definition}

\begin{lemma}\label{EKR.62} 
Any reduced bundle can be equipped with an iterated connection in the sense
of Definition~\ref{EKR.59}.
\end{lemma}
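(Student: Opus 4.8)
The plan is to produce the iterated connection by descending a $G$-equivariant connection from the lifted iterated bundle over the resolution. By Theorem~\ref{abelianTh}, realized through the equivalences of Propositions~\ref{E-K-n.424} and~\ref{E-K-n.435}, the reduced bundle $W_*$ is the image of a $G$-equivariant iterated bundle $\tilde W_*\in\Bun_G(X_*)$. By Corollary~\ref{EKR.61} each $\tilde W_\alpha$ may be equipped with a $G$-equivariant connection $\tilde\nabla_\alpha$, and the family $\{\tilde\nabla_\alpha\}_{\alpha\in A}$ may be taken to be intertwined by the connecting isomorphisms $\mu_{\alpha,\beta}$ of~\eqref{EKR.4}. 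All of the content of the lemma will be extracted by pushing these connections down to the base.

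First I would carry out the descent at a single node $\alpha$. As in the proof of Proposition~\ref{E-K-n.424}, the $B_\alpha$-action on $\tilde W_\alpha$ gives an isotypic decomposition $\tilde W_\alpha=\bigoplus_{\hat b}U_{\hat b}$, which the $G$-equivariant connection $\tilde\nabla_\alpha$ preserves. For $\hat g\in\hG$ restricting to $\hat b$, the bundle $\hat g^{-1}\otimes U_{\hat b}$ carries a trivial $B_\alpha$-action and the tensor connection $d_{\hat g^{-1}}\otimes\tilde\nabla_\alpha$, where $d_{\hat g^{-1}}$ denotes the product connection on the representation line $\hat g^{-1}$. Being $G/B_\alpha$-invariant, this connection descends over the free quotient $Y_\alpha=X_\alpha/G$ to a connection $\nabla_{\hat g,\alpha}$ on $W_{\hat g,\alpha}$.

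Next I would verify the twisting law~\eqref{EKR.60}. For $\hat h\in\widehat{G/B_\alpha}$ the element $\hat h\hat g$ again restricts to $\hat b$, so $\shf(\hat h)\otimes W_{\hat h\hat g,\alpha}$ is the descent of $\hat h\otimes(\hat h\hat g)^{-1}\otimes U_{\hat b}=\hat g^{-1}\otimes U_{\hat b}$, recovering $W_{\hat g,\alpha}$ exactly as in~\eqref{EKR.20}. Carrying the connections along, this bundle descends from $\hat g^{-1}\otimes U_{\hat b}$ with connection $d_{\hat h}\otimes d_{(\hat h\hat g)^{-1}}\otimes\tilde\nabla_\alpha$, and multiplicativity of the product connection on the representation lines gives $d_{\hat h}\otimes d_{(\hat h\hat g)^{-1}}=d_{\hat g^{-1}}$; hence the descended connection is $\nabla_{\hat g,\alpha}$, so that $\nabla_{\hat h}\otimes\nabla_{\hat h\hat g,\alpha}=\nabla_{\hat g,\alpha}$ for every $\hat h$, as required.

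Finally, compatibility under the augmented pull-back isomorphisms is inherited from the lifted connections. By Proposition~\ref{E-K-n.435} the maps $\pi_\alpha^{\#}$ are the descents of fibrewise pull-back along the fibrations $\psi_{\alpha,\beta}$, and the isomorphisms $T_\alpha$, $T_{\alpha,\beta}$ of~\eqref{E-K-n.446}--\eqref{E-K-n.447} are the descents of the $\mu_{\alpha,\beta}$; as the $\tilde\nabla_\alpha$ are intertwined by the latter, the descended connections $\nabla_{\hat g,\alpha}$ are intertwined by the former along every chain~\eqref{Res.3}. The step I expect to require the most care is the twisting-law verification, where one must check that it is precisely the multiplicativity of the product connection on $\hG$ that converts the bundle identity~\eqref{EKR.20} into the connection identity~\eqref{EKR.60}; the matching over corners then reduces, as in the proof of Lemma~\ref{EKR.5}, to the consistency of the rigidified product structures near the boundary faces of $X_0$.
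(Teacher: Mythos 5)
Your proposal is correct and follows exactly the route the paper takes: the paper's proof is a one-line instruction to obtain the connection "following the reduction procedure from a $G$-connection on the corresponding iterated $G$-bundle over $X_*$" (i.e.\ Corollary~\ref{EKR.61} plus the descent of Propositions~\ref{E-K-n.424} and~\ref{E-K-n.435}), and you have simply written out the details of that descent, including the verification of the twisting law~\eqref{EKR.60} and the compatibility over the boundary fibrations.
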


\begin{proof} Such a connection can be obtained following the reduction
  procedure from a $G$-connection on the corresponding iterated $G$-bundle
  over $X_*.$ It is also straightforward to construct such a connection
  directly.
\end{proof}

The odd version of reduced bundles may be defined by `suspension' -- simply
taking the product with an interval and demanding that all bundles be
trivialized over the end points leading to a category 
\begin{equation}
\Bun^{B_*}_c(\hG\times ([0,1]\times Y_*;(\{0\}\cup\{1\})\times Y_*).
\label{EKR.53}\end{equation}
This leads to the odd version of equivariant K-theory  
\begin{multline}
K_G^1(M)=K^1_{\red}(Y_*)=\\
\Bun^{B_*}_c(\hG\times
    Y_*;(\{0\}\cup\{1\})\times Y_*)\ominus\Bun^{B_*}_c(\hG\times
    Y_*;(\{0\}\cup\{1\})\times Y_*)/\simeq.
\label{EKR.52}\end{multline}

The isotropy tree can also be `pruned' by choosing any subtree 
\begin{equation}
A'\subset A,\ \alpha \in A',\ \beta \in A,\ \beta<\alpha \Longrightarrow
\beta \in A'.
\label{EKR.35}\end{equation}
If $P=A\setminus A'$ is the complement of a tree then reduced bundles which
are trivialized on the elements of $P$ form a subcategory 
\begin{equation}
\Bun^{B_*}_c(\hG\times Y_*;P).
\label{EKR.48}\end{equation}
These correspond to $G$ bundles over $M$ which are trivialized over the
corresponding isotropy types. We denote by $K^*_{\red}(Y_*;P)$ the Grothendieck
groups of these relative spaces of bundles and their suspended versions.

\section{Delocalized equivariant cohomology}\label{DelA}

If $\rho \in\widehat{G}$ is an irreducible representation of a compact
abelian group on a complex line, $E,$ then, the corresponding trivial line bundle
over a $G$-space, $X,$ is $G$-equivariant,
\begin{equation}
E\in\Bun_G(X).
\label{E-K-n.450}\end{equation}
The deRham differential defines a $G$-equivariant connection on
$E.$ If the action of $G$ is free, so $X\longrightarrow Y$ is a
principal $G$-bundle, then $E$ descends to a bundle, $\widetilde{E},$ with
connection. The Chern character therefore defines a multiplicative map
\begin{equation}
\Ch:\widehat{G}\longrightarrow \CI(Y;\Lambda ^{2*}).
\label{E-K-n.452}\end{equation}

Let $R(G)$ be the representation algebra with complex coefficients, so the
vector space of formal finite linear combinations of elements of
$\widehat{G}.$ Then the map \eqref{E-K-n.452} extends to a map of algebras
\begin{equation}
\Ch:R(G)\longrightarrow \CI(Y;\Lambda ^{2*}).
\label{E-K-n.466}\end{equation}
For a closed subgroup, $B\subset G,$ $R(G/B)\longrightarrow R(G)$ gives a
multiplicative action
\begin{equation}
R(G/B)\times R(G)\longrightarrow R(G).
\label{E-K-n.468}\end{equation}
This and \eqref{E-K-n.452}, for $G/B$ lead to:

\begin{definition}\label{EKR.28} For a compact abelian group $G$ acting
  with fixed isotropy group $B$ on a compact manifold $X$ the space of
  twisted forms over the base $Y$ is defined as
\begin{equation}
\CI(Y;\Lambda ^*_{\dl})=\CI(Y;\Lambda ^*)\otimes _{\Ch}R(G).
\label{E-K-n.467}\end{equation}
\end{definition}

Thus an element of this space is a finite linear combination of formal products
$$
u_i\otimes \hat{g}_i,\ u_i\in\CI(Y;\Lambda ^*),\ \hat{g}_i\in\widehat{G}
$$
under the equivalence relation
\begin{equation}
u\otimes \hat{g}\simeq \Ch(\hat h)\wedge u\otimes \hat h\hat
g,\ \forall\ \hat h\in\widehat{G/B},\ u\in\CI(Y;\Lambda ^*).
\label{E-K-n.469}\end{equation}
Since the Chern character is closed, the deRham differential descends 
\begin{equation}
d:\CI(Y;\Lambda ^*_{\dl})\longrightarrow \CI(Y;\Lambda ^*_{\dl}),\ d^2=0.
\label{E-K-n.455}\end{equation}

\begin{lemma}\label{E-K-n.474} Suppose that $\pi_1:X\longrightarrow X_1$ is
  a $G$-equivariant fibration for actions with fixed isotropy groups
  $K\subset B$ and $\tilde{\pi}_1:Y\longrightarrow Y_1$ is the induced
  fibration, then there is a natural augmented pull-back
\begin{equation}
\pi^{\#}_1:\CI(Y_1;\Lambda ^*_{\dl})\longrightarrow \CI(Y;\Lambda ^*_{\dl})
\label{E-K-n.456}\end{equation}
which itertwines the action of $d.$
\end{lemma}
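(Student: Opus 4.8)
The plan is to mirror the construction of the augmented pull-back on reduced bundles in Proposition~\ref{E-K-n.435}, replacing equivariant bundles and the shift $\shf$ by forms and the Chern character $\Ch$. Exactly as there, I would first factor the equivariant fibration through the fibre product $\tilde X_1=X_1\times_{Y_1}Y$, noting that the induced $G$-action on $\tilde X_1$ has isotropy group $B$ with quotient $Y$. It then suffices to define $\pi_1^{\#}$ in the two elementary cases: (a) the fibration $\tilde X_1\to X_1$, which has unchanged isotropy group $B$ and induces $\tilde\pi_1:Y\to Y_1$; and (b) the isotropy refinement $X\to\tilde X_1$ from $B$ to $K\subset B$, which leaves the base $Y$ fixed.

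In case (a) both spaces carry the $\hGB$-twisting \eqref{E-K-n.469}, and I would set $\pi_1^{\#}(u\otimes\hat g)=\tilde\pi_1^*u\otimes\hat g$, the ordinary pull-back of forms tensored with the identity on $R(G)$. The one point requiring verification is compatibility with the twisting: for $\hat h\in\hGB$ one needs $\tilde\pi_1^*\Ch(\hat h)=\Ch(\hat h)$ on $Y$. This is the essential geometric input and follows from the naturality of descent, since the trivial line bundle $\hat h$ with its product connection descends compatibly on $Y_1$ and on $Y$ over the equivariant fibration, so by functoriality of the Chern character its pull-back is again $\Ch(\hat h)$.

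In case (b) the base is unchanged while the twisting group enlarges from $\hGB$ to $\hGK$, using the inclusion $\hGB\hookrightarrow\hGK$ coming from $K\subset B$. The map is then simply the canonical projection imposing the additional $\hGK$-relations, so its well-definedness is automatic. Written out on fundamental-domain representatives through a splitting, this projection is the finite summation parallel to \eqref{E-K-n.442}, whose component at $\tau(\hat k)$ is $\sum_{\{\hat b\in\hB;\,\hat b|_{K}=\hat k\}}\Ch(\hat h)\wedge V_{\hat h\tau'(\hat k)}$ with $\hat h\in\hGK$ determined by \eqref{EKR.26}; the sum is finite because a delocalized form is a finite combination, and independence of the iterated splittings $\tau,\tau'$ follows as in Proposition~\ref{E-K-n.435}. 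Composing the two cases yields $\pi_1^{\#}:\CI(Y_1;\Lambda^*_{\dl})\to\CI(Y;\Lambda^*_{\dl})$.

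Finally, the intertwining with the differential \eqref{E-K-n.455} is immediate once the map is constructed: in case (a) the de Rham differential commutes with $\tilde\pi_1^*$ and acts only on the form factor, while in case (b) closedness of the Chern character gives $d(\Ch(\hat h)\wedge V)=\Ch(\hat h)\wedge dV$, so $d$ commutes with both the wedging and the finite sum. I expect the main obstacle to be well-definedness in the twisted setting, specifically the identity $\tilde\pi_1^*\Ch(\hat h)=\Ch(\hat h)$ underpinning case (a), together with confirming that the summation in case (b) genuinely lands in the $\hGK$-twisted space and is splitting-independent; everything else is a routine transcription of Proposition~\ref{E-K-n.435}.
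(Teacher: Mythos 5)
Your argument is correct and follows the paper's own proof essentially verbatim: the paper likewise reduces to the elementary cases via the fibre product (together with the trivial case of an isomorphism of principal bundles covering the identity), pulls back the coefficient forms when the isotropy group is unchanged, and in the isotropy-refinement case with $Y=Y_1$ defines $\pi_1^{\#}$ by collecting representatives over a splitting with Chern-character weights as in \eqref{E-K-n.473}, the intertwining with $d$ coming from closedness of the Chern character. The one discrepancy is that \eqref{E-K-n.473} weights by $\Ch(\hat f_{kj})^{-1}$ where you write $\Ch(\hat h)$; this traces back to an internal sign-convention mismatch in the paper between the equivalence relation \eqref{E-K-n.469} and the transformation law \eqref{EKR.20}, not to a gap in your reasoning.
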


\begin{proof} To define \eqref{E-K-n.456} it suffices to consider three elementary
cases.

First suppose that $\pi$ is simply an isomorphism of principle bundles covering the
identity map $\tilde{\pi}_1.$ The only appearance of the bundle in
\eqref{E-K-n.467}, \eqref{E-K-n.469} is through the Chern character and
this is invariant under such a transformation.

Secondly suppose that $K=B$ but that $\pi_1$ is a $G$-equivariant
fibration. Then, after a bundle isomorphism, this corresponds to $X_1$
being the pull-back of the principal $G/B$ bundle over $Y_1$ under a
fibration $\tilde{\pi}_1.$ The bundles $E$ corresponding to representations
of $G/B$ and their conncections pull back naturally and in this case
\eqref{E-K-n.456} corresponds to the pull-back of the coefficient forms.

Finally then consider the case that $X$ is a principal $G/K$ bundle and
that $K\subset B\subset G$ is a second closed subgroup with
\begin{equation}
\pi_1:X\longrightarrow X_1=X/B,
\label{E-K-n.471}\end{equation}
so $Y=Y_1.$ The equivalence relation \eqref{E-K-n.469}, now for
$\hat i\in\widehat{G/B}$ means that any element of $\CI(X_1;\Lambda
^*_{\dl})$ can be represented by a finite sum  
\begin{equation}
u_i\otimes \hat g_i
\label{E-K-n.472}\end{equation}
where the $\hat g_i\in\hG$ exhaust $\hB$ under
restriction. These can be chosen, and relabelled, to be $\hat f_{kj}\hat
g_{j}$ where the $\hat g_{j}\in\hG$ restrict to exhaust
$\hK$ and $\hat f_{kj}\in\hGB.$ Then 
\begin{equation}
\pi_1^{\#}:\sum\limits_{\text{finite}}u_{jk}\otimes \hat f_{kj}\hat g_{j}=
\sum\limits_{\text{finite}} (\sum\limits_{k}\Ch(f_{kj})^{-1}\wedge u_{jk})\otimes \hat g_j.
\label{E-K-n.473}\end{equation}
For elements of $\widehat{G/B}$ the construction of the Chern character
factors through the projection to $X_1.$

The general case corresponds to a composite of these three cases.
\end{proof}

Our model for the delocalized equivariant cohomology of Baum, Brylinski
and MacPherson in the case of a smooth action of a compact abelian Lie
group $G$ on a compact manifold $M$ is the following data on the resolved quotient.

\begin{definition}\label{E-K-n.458} An element of the delocalized deRham
  complex $\CI(Y_*;\Lambda ^*_{\dl})$ consists of:-
\begin{enumerate}
\item For each $\alpha \in A$ a twisted smooth form $u_\alpha
  \in\CI(Y_\alpha;\Lambda ^*_{\dl}).$
\item Compatibility conditions at all boundary faces
\begin{equation}
u_\alpha \big|_{H_\beta  (Y_\alpha )}=\pi_{\alpha\beta }^{\#}u_\beta ,\ \beta >\alpha .
\label{E-K-n.475}\end{equation}
including the  boundary hypersurfaces of the principal quotient
corresponding to $\alpha =0.$
\end{enumerate}
\end{definition}

Again the relative versions corresponding to a subtree $A'\subset A,$
$A=A'\sqcup P$ are similarly defined by demanding that the forms vanish
over the boundary hypersurfaces indexed by $P.$ 

If $\nabla_*$ is an iterated connection on an iterated bundle
$W_*\in\Bunc{B_*}(\hG\times Y_*),$ as in Definition~\ref{EKR.59} and
Lemma~\ref{EKR.62} then the Chern character of each bundle $W_\alpha$ is a
form on on $\hG\times Y_\alpha:$
\begin{equation}
\Ch(W_\alpha ,\nabla_\alpha )_{\hat g}=
\Ch((W_\alpha)_{\hat g} ,\nabla_\alpha) \Mon\{\hat g\}\times Y_\alpha .
\label{EKR.57}\end{equation}
\begin{proposition}\label{EKR.49} The Chern character of a reduced bundle
with compatible connection is an element of $\CI(Y_*;\Lambda^*_{\dl}).$
\end{proposition}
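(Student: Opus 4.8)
The plan is to produce, from the reduced bundle $W_*$ and a choice of iterated connection $\nabla_*$ (Definition~\ref{EKR.59}, Lemma~\ref{EKR.62}), the two pieces of data required by Definition~\ref{E-K-n.458}: a twisted form $u_\alpha\in\CI(Y_\alpha;\Lambda^*_{\dl})$ for each node $\alpha\in A$, and the boundary compatibilities \eqref{E-K-n.475}. The natural candidate is $u_\alpha=\sum_{\hat g}\Ch((W_\alpha)_{\hat g},\nabla_{\hat g,\alpha})\otimes\hat g$, the formal sum over $\hat g\in\hG$ of the Chern character forms \eqref{EKR.57}. Thus there are two things to verify: first that this sum is a well-defined element of the twisted complex over each $Y_\alpha$, and second that these elements are intertwined by the augmented pull-backs along the connecting fibrations.

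For the first point I would argue that the bundle transformation law \eqref{EKR.20} passes, under the Chern character, to exactly the cohomological twisting \eqref{E-K-n.469} that defines $\CI(Y_\alpha;\Lambda^*_{\dl})$. Concretely, applying the multiplicative Chern character to \eqref{EKR.20} and using that the descent of the representation $\hat h\in\widehat{G/B_\alpha}$ enters with an inverse (as in the construction \eqref{EKR.24} in the proof of Proposition~\ref{E-K-n.424}), one obtains a relation between $\Ch((W_\alpha)_{\hat h\hat g})$ and $\Ch((W_\alpha)_{\hat g})$ through wedging with $\Ch(\hat h)$. The key point is that this is an equality of differential forms, not merely of cohomology classes: this is precisely what the connection compatibility \eqref{EKR.60} guarantees, since $\nabla_{\hat h}\otimes\nabla_{\hat h\hat g,\alpha}=\nabla_{\hat g,\alpha}$ forces the Chern--Weil representatives to transform strictly. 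The finiteness built into $\Bunc{B_\alpha}(\hG\times Y_\alpha)$, namely support finite over $\hB_\alpha$, ensures the sum defining $u_\alpha$ has finitely many terms modulo \eqref{E-K-n.469}, so $u_\alpha$ is a genuine element of the twisted complex.

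For the second point I would use the connecting isomorphisms $T_{\alpha,\beta}$ of \eqref{E-K-n.447}. Naturality of the Chern character under $T_{\alpha,\beta}\colon\pi_{\alpha\beta}^{\#}W_\beta\simeq W_\alpha|_{H_\beta(Y_\alpha)}$, together with the compatibility of $\nabla_*$ under augmented pull-back demanded in Definition~\ref{EKR.59}, reduces \eqref{E-K-n.475} to the single assertion that the Chern character intertwines the two augmented pull-backs already constructed: the bundle-level map of Proposition~\ref{E-K-n.435}, formula \eqref{E-K-n.442}, and the form-level map of Lemma~\ref{E-K-n.474}, formula \eqref{E-K-n.473}. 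Following the factorisations used in those two proofs, I would check this over the three elementary cases. For an isomorphism of principal bundles and for an honest fibration the statement is immediate, since the Chern character commutes with ordinary pull-back and depends on a principal bundle only through \eqref{E-K-n.452}.

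The hard part will be the remaining case, the quotient by a larger isotropy subgroup $K\subset B$ with $Y=Y_1$, where both augmented pull-backs are genuine summations. Here one must show that applying $\Ch$ to the bundle-level sum $\bigoplus_{\hat b|_K=\hat k}\shf(\hat h)\otimes V_{\hat h\tau'(\hat k)}$ of \eqref{E-K-n.442} reproduces the form-level coefficient $\sum_k\Ch(f_{kj})^{-1}\wedge u_{jk}$ of \eqref{E-K-n.473}. Multiplicativity turns each bundle summand $\shf(\hat h)\otimes V$ into $\Ch(\hat h)\wedge\Ch(V)$, so the content is a bookkeeping identity: the set of $\hat b\in\hB$ restricting to a fixed $\hat k\in\hK$, a coset of $\widehat{B/K}$, must be matched, via the unique $\hat h\in\widehat{G/K}$ of \eqref{EKR.26} and the splittings \eqref{E-K-n.441}, with the $\widehat{G/B}$-representatives $\hat f_{kj}$ in \eqref{E-K-n.473}, using the twisting law \eqref{EKR.20} to move between the two labellings and to fix the sign of the exponent on $\Ch$. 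Once this one case is settled, consistency along chains, condition (3) of Definition~\ref{EKR.34}, follows by iterating the same argument over \eqref{E-K-n.447}, and the resulting collection $u_*$ lies in $\CI(Y_*;\Lambda^*_{\dl})$; that it is moreover $d$-closed is automatic from the closedness of the Chern character, though not needed for the statement.
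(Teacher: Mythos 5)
Your proposal is correct and follows essentially the same route as the paper's (much terser) proof: the twisting law \eqref{E-K-n.469} for the forms \eqref{EKR.57} comes from the connection compatibility \eqref{EKR.60}, and the boundary conditions \eqref{E-K-n.475} come from naturality of the Chern character under the augmented pull-backs. You are in fact more explicit than the paper, which disposes of the summation case $K\subset B$ with ``standard properties of the Chern character under pull-back''; your identification of the bookkeeping match between \eqref{E-K-n.442} and \eqref{E-K-n.473} is exactly the content hidden in that phrase.
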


\begin{proof} The forms \eqref{EKR.57} shift correcly under the action of
  $\hGB$ in view of the corresponding property for the connections and the
  iterative relations over the boundary fibrations similarly follow from
  the standard properties of the Chern character under pull-back.
\end{proof}

The verification of the `Atiyah-Hirzebruch-Baum-Brylinski-MacPherson'
isomorphism \eqref{E-K-n.463} is given in \S\ref{Iso} below. The iterative
proof uses the six-term exact sequences arising from pruning the isotropy
tree at successive levels. As with the whole approach here, this is based on
reduction to the case of a fixed isotropy group where the result reduces in
essence to the Atiyah-Hirzebruch isomorphism.

\begin{proposition}\label{EKR.29} If $G$ is a compact abelian group acting
  on a compact manifold with fixed isotropy group $B$ then the Chern
  character gives an isomorphism of $K_G(M)\otimes \bbC$ and $H^{\even}_{\dl}(Y).$
\end{proposition}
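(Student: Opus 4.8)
The plan is to fix a splitting homomorphism $\tau:\hB\to\hG$ as in \eqref{E-K-n.430} and use it to decompose both sides as direct sums indexed by $\hB$, thereby reducing the assertion to the classical Atiyah--Hirzebruch isomorphism $\Ch:\Ke(Y)\otimes\bbC\xrightarrow{\simeq}H^{\even}(Y;\bbC)$ applied summand by summand. The Chern character itself is intrinsic, being defined in Proposition~\ref{EKR.49} without reference to $\tau$; the splitting is used only to untwist the laws \eqref{EKR.20} and \eqref{E-K-n.469} and to verify bijectivity, so the final isomorphism will not depend on $\tau.$

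First I would treat the K-theory side. By Proposition~\ref{E-K-n.424}, $K_G(M)$ is the Grothendieck group of $\Bunc B(\hG\times Y).$ A reduced bundle is determined by its restriction to $\tau(\hB)\times Y$, since \eqref{EKR.20} recovers $W_{\hat h\tau(\hat b)}=\shf(\hat h)^{-1}\otimes W_{\tau(\hat b)}$ over every other lift of $\hat b\in\hB$, and conversely an arbitrary finitely supported family $(W_{\tau(\hat b)})_{\hat b\in\hB}$ of bundles over $Y$ determines a reduced bundle. Passing to Grothendieck groups and tensoring with $\bbC$ then gives $K_G(M)\otimes\bbC\cong\bigoplus_{\hat b\in\hB}\bigl(\Ke(Y)\otimes\bbC\bigr).$

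Next I would decompose the target. Since each $\Ch(\hat h)$, $\hat h\in\hGB$, is a closed form, wedging by it is a chain map, so the equivalence \eqref{E-K-n.469} is compatible with the differential \eqref{E-K-n.455}. Writing a generator as $u\otimes\hat g$ with $\hat g=\hat h\,\tau(\hat b)$, $\hat h\in\hGB$, the relation \eqref{E-K-n.469} yields $u\otimes\hat g\simeq\Ch(\hat h)^{-1}\wedge u\otimes\tau(\hat b)$, where $\Ch(\hat h)$ is invertible because its degree-zero part is $1$. Hence every delocalized form has a unique representative $\sum_{\hat b}v_{\hat b}\otimes\tau(\hat b)$, and $d$ acts slotwise by $v_{\hat b}\mapsto dv_{\hat b}.$ Thus $\bigl(\CI(Y;\Lambda^*_{\dl}),d\bigr)\cong\bigoplus_{\hat b\in\hB}\bigl(\CI(Y;\Lambda^*),d\bigr)$ as complexes, giving $H^{\even}_{\dl}(Y)\cong\bigoplus_{\hat b\in\hB}H^{\even}(Y;\bbC).$

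Finally I would match the Chern character to these two decompositions. Multiplicativity of the Chern character under tensor product, together with the connection law \eqref{EKR.60}, gives $\Ch(W_{\hat g},\nabla_{\hat g})=\Ch(\hat h)\wedge\Ch(W_{\hat h\hat g},\nabla_{\hat h\hat g})$ for $\hat h\in\hGB$, which is exactly the equivariance encoded in \eqref{E-K-n.469}; consequently the family \eqref{EKR.57} is represented by the finite sum $\sum_{\hat b}\Ch(W_{\tau(\hat b)},\nabla_{\tau(\hat b)})\otimes\tau(\hat b)$, whose image under the splitting isomorphism is the tuple $\bigl(\Ch(W_{\tau(\hat b)})\bigr)_{\hat b}$ of ordinary Chern characters. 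So the delocalized $\Ch$ is identified with $\bigoplus_{\hat b}\Ch$, and the proposition follows from the classical isomorphism in each summand. I expect the main obstacle to be precisely this last compatibility: one must check that the untwisting used on the bundle side via \eqref{EKR.20} and that used on the form side via \eqref{E-K-n.469} are governed by the \emph{same} splitting $\tau$ and the \emph{same} shift $\shf$, so that $\Ch$ is diagonal with respect to both direct-sum decompositions. The remaining steps are then routine, and the intrinsic definition of $\Ch$ guarantees that the resulting isomorphism is independent of $\tau.$
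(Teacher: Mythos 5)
Your proposal is correct and follows essentially the same route as the paper: both fix a splitting $\tau:\hB\longrightarrow\hG$, use it to identify reduced bundles and delocalized forms with finitely supported families indexed by $\hB$, observe that the Chern character acts componentwise with respect to this untwisting, and invoke the classical rational Atiyah--Hirzebruch isomorphism on each summand. The paper packages injectivity at the bundle level (stable trivialization of each component $V_{\pm,\tau(\hat b)}$ and then of the whole pair) rather than as an explicit direct-sum decomposition of both sides, but the substance is the same.
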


\begin{proof} The Atiyah-Hirzebruch isomorphism is valid rationally. This
  amounts to the two statements that for a compact manifold (with corners)
 the range of the Chern character
\begin{equation}
\Ch:K(Y)\longrightarrow H^{\ev}(Y)
\label{EKR.31}\end{equation}
spans the cohomology (with complex coefficients) and that the null space
consists of torsion elements. At the bundle level this means that if the
Chern character for a pair of bundles $V_+\ominus V_-$ is exact then there
for some integers $p$ and $N$  
\begin{equation}
I:V_+^p\oplus\bbC^N\longrightarrow V_-^p\oplus\bbC^N.
\label{EKR.32}\end{equation}
A given connection on the $V_\pm$ lifts to a connection which can then be
deformed to commute with $I$ and so have zero chern character.

Now, in the equivariant case we can consider a splitting homomorphism
$\tau:\hB\longrightarrow \hG$ and then pull a pairs of bundles
$V_{\pm}\in\Bunc B(\tG\times Y) $ back to $\hB\times Y$ where the Chern
character is given by
\begin{equation}
\sum\limits_{\hat b\in\hB}\tau(\hat g) \otimes(\Ch(V_{+,\tau(\hat b)}-\Ch(V_{-,\tau(\hat b)}).
\label{EKR.33}\end{equation}
The vanishing of the class $H^{\ev}(Y;\Lambda _{\dl})$ is equivalent to the
exactness of each of the deRham classes $\Ch(V_{+,\tau(\hat
  b)}-\Ch(V_{-,\tau(\hat b)}.$ Thus the vanishing of the Chern character
in delocalized cohomology implies that each of the pairs $V_{\pm,\tau(\hat
  b)}$ is stably trivial in the sense of \eqref{EKR.32}.

Since $\hB$ is finite and we may always further stabilize \eqref{EKR.32} by
taking powers and adding trivial bundles, we may take $p$ to be the product
of the integers for each $\hat b$ and similarly increase $N.$ This however
amounts to a stable trivialization of the whole bundle $V_+^p\ominus V_-^p$ as an
element $\Bun^B_c(\hG\times Y)$ and proves the injectivity of
\eqref{E-K-n.463} in this case.

The surjectivity is a direct consequence of Atiyah-Hirzebruch isomorphism
and the definition of delocalized forms.
\end{proof}

\section{The relative sequences}\label{CheA}

Our proof of Theorem~\ref{E-K-n.460} is based on induction over 
pruning and the six-term exact sequences which results from passing from one
subtree to another with one more element
\begin{equation}
A''=A'\cup\{\alpha\},\ \alpha \notin A',\ \beta <\alpha \Longrightarrow \beta
\in A'.
\label{EKR.37}\end{equation}
Reduced bundles `supported' on subtrees and the corresponding K-groups
are discussed above.

\begin{proposition}\label{EKR.55} For any subtrees $A'=A\setminus P$
  and $A'\setminus\{\alpha\}$ there is a six term exact sequence 
\begin{equation}
\xymatrix{
K^0_{\red}(Y_*;P\cup\{\alpha \})\ar[r]&K^0_{\red}(Y_*;P)\ar[r]&K^0_{\red}(Y_\alpha ;P)\ar[d]\\
K^1_{\red}(Y_\alpha ;P)\ar[u]&K^1_{\red}(Y_*;P)\ar[l]&K^1_{\red}(Y_*;P\cup\{\alpha \}).\ar[l]
}
\label{10.7.2018.3}\end{equation}
\end{proposition}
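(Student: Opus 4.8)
The plan is to construct the six-term exact sequence as the standard K-theoretic long exact sequence of a "pair" of subtrees, where the relative term is concentrated at the single stratum $\alpha$ that distinguishes $A'$ from $A'\setminus\{\alpha\}$. The key observation is that since $\alpha$ is minimal in $P\cup\{\alpha\}$ relative to $A'$ (by the condition $\beta<\alpha\Rightarrow\beta\in A'$ in \eqref{EKR.37}), adding $\alpha$ back to the pruned tree only affects reduced bundles through their behaviour on $Y_\alpha$ and its connecting fibration. First I would interpret the three K-groups appearing in \eqref{10.7.2018.3} as follows: $K^*_{\red}(Y_*;P)$ consists of reduced bundles trivialized over all strata in $P$; $K^*_{\red}(Y_*;P\cup\{\alpha\})$ are those additionally trivialized over $\alpha$; and $K^*_{\red}(Y_\alpha;P)$ is the reduced K-theory of the single stratum $Y_\alpha$ (with its own subtree structure inherited from $A_\alpha=\{\beta>\alpha\}$), relative to the part of $P$ lying above $\alpha$. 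The restriction-to-$Y_\alpha$ map gives the horizontal arrow $K^0_{\red}(Y_*;P)\to K^0_{\red}(Y_\alpha;P)$, and the inclusion of bundles trivialized on $\alpha$ gives $K^0_{\red}(Y_*;P\cup\{\alpha\})\to K^0_{\red}(Y_*;P)$.

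The core of the argument is to identify this with the six-term exact sequence of a cofibration or, more precisely, of a relative pair in ordinary (nonequivariant) K-theory, transported through the reduction equivalence. By Theorem~\ref{abelianTh} and Proposition~\ref{E-K-n.424}, reduced bundles over $Y_*$ correspond to genuine $G$-equivariant bundles over the resolution $X_*$, and trivializing over a stratum $\alpha$ corresponds to trivializing the equivariant bundle over the collective boundary hypersurface $H_\alpha(X_0)$ and its image. Thus I would realize the sequence \eqref{10.7.2018.3} as the K-theory exact sequence associated to the decomposition of $Y_0$ (and the tree above it) into the closed piece $H_\alpha(Y_0)$ fibring over $Y_\alpha$ and its complement. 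Concretely, the pair $(A',A'\setminus\{\alpha\})$ realizes $Y_\alpha$ as a "boundary face at infinity" and the relative group measures the difference; the suspension in the definition \eqref{EKR.52} supplies the odd groups $K^1_{\red}$, so that excision and the standard Puppe/connecting-map construction yield the connecting homomorphisms (the two vertical arrows).

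The hard part will be establishing exactness precisely at the two middle terms, i.e.\ verifying that a reduced bundle trivialized over $P$ whose restriction to $Y_\alpha$ is stably trivial can itself be deformed to one trivialized over $P\cup\{\alpha\}$. This is where the augmented pull-back machinery of Proposition~\ref{E-K-n.435}, together with the collar-modification technique from the proof of Lemma~\ref{EKR.5}, does the real work: a trivialization over $Y_\alpha$ must be extended inward along the normal fibration $\psi_\alpha$ using the transformation law \eqref{EKR.20}, and one must check that the twisting by $\shf$ is compatible with this extension. The minimality of $\alpha$ ensures no interference from lower strata, so the extension problem is genuinely a relative one over a single fibration, where the homotopy lifting property of the fibration $\phi_\alpha$ and the rotation construction in the isomorphism bundle allow the trivialization to be pushed off of $H_\alpha(Y_0)$. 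Once exactness at the middle terms is in hand, exactness at the remaining four spots follows formally from the construction of the connecting maps as in the ordinary relative K-theory sequence.
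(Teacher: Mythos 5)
Your construction of the six maps coincides with the paper's: inclusion and restriction for the horizontal arrows, and a collar/clutching construction (with a Bott factor for one of the two) for the vertical ones; your treatment of exactness at the two middle terms \mhy\ extending a stable trivialization over $Y_\alpha$ inward along the normal fibration by the rotation trick of Lemma~\ref{EKR.5} \mhy\ is also consistent with the paper, which in fact dismisses those two spots as immediate. The genuine gap is your closing claim that exactness at the remaining four spots ``follows formally from the construction of the connecting maps as in the ordinary relative K-theory sequence.'' The groups $K^*_{\red}(Y_*;P)$ are Grothendieck groups of a bespoke category of twisted, finitely supported, iterated bundles subject to \eqref{EKR.20}; nothing established up to this point shows that they constitute a cohomology theory on pairs with excision, so the Puppe-sequence formalism cannot be invoked off the shelf. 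If instead you intend to transport everything through Theorem~\ref{abelianTh} to equivariant K-theory of invariant subspaces of $M$ and quote Segal's exact sequence, you would first need relative versions of Propositions~\ref{E-K-n.424} and \ref{E-K-n.435} identifying all three terms \mhy\ in particular $K^*_{\red}(Y_\alpha;P)$, which is attached to the stratum $Y_\alpha$ itself and not to the hypersurface $H_\alpha(Y_0)$ \mhy\ together with a check that the maps correspond; none of this is carried out in your sketch.

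Concretely, the two places where real work is required are the corners adjacent to the connecting homomorphisms, and your proposal contains no substitute for either argument. At $K^0_{\red}(Y_*;P\cup\{\alpha\})$ one must show that a class trivialized over $\alpha$ which dies in $K^0_{\red}(Y_*;P)$ lies in the image of the connecting map: the paper extracts the required suspended class over $\widehat{B_\alpha}\times Y_\alpha\times[0,1]$ from the homotopy witnessing triviality, using that the homotopy respects the trivializations over the deeper strata. At $K^1_{\red}(Y_\alpha;P)$ one must show that a suspended class whose clutching insertion into the collar of $\hB\times H_\alpha$ is null-homotopic actually extends to all of $Y_*$, i.e.\ is a restriction from $K^1_{\red}(Y_*;P)$; the paper achieves this by an explicit extension of the bundle off the collar using the slices $V_{1-t}\big|_{\hB\times H_\alpha\times[t,1]}$ of the null-homotopy, arranged so that the triviality conditions at the endpoints and over $P$ are preserved. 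Until you either supply these two verifications directly or establish the excision property that would make them formal, the proof is incomplete.
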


\begin{proof} The upper left arrow is given by inclusion and the upper
  right arrow given by restriction of the reduced bundle data to be
  non-trivial only on $Y_{\alpha}$. The arrows in the bottom row are
  defined accordingly.  Exactness in the middle of the top and the bottom
  row are immediate from the definitions.

To define the connecting homomorphisms on the left, consider an element in
$K^1_{\red}(Y_\alpha ;P).$ Choosing splittings as in \eqref{E-K-n.444} this
can be represented by a pair of bundles over $\widehat{B_\alpha}\times
Y_\alpha\times[0,1]$ with identifictions at all boundary hypersurfaces of
$Y_{\alpha},$ since these correspond to deeper strata, and at the ends of
the interval. Using the augmented pull-back map, this element lifts to a
pair of bundles over $\hB\times H_{\alpha}\times [0,1].$ Now, we can
identify $\widehat{B}\times H_{\alpha}\times [0,1]$ with a collar
neighborhood of $\widehat{B}\times H_{\alpha}$ in $\widehat{B}\times
Y$. Since the bundles are trivial over the ends of the interval, this
defines an element of $K^0_{\red}(Y;P\cup\{\alpha \})$ which is independent
of choices so defines a homomorphism. For the connecting homomorphism on
the right the construction is the same after tensoring with the Bott bundle
on $[0,1]^2$ and using one variable as the normal to the boundary and the
other as the suspension variable.

To check exactness at the top left corner, suppose an element of
$K^0_{\red}(Y;P)$ maps to the trivial element of
$K^0_{\red}(Y;P\cup\{\alpha\})$ under inclusion of reduced bundle
data. Then, for a stabilized representative $V_{\pm},$ inside $Y_{\alpha}$
there is a homotopy of the trivial bundle to itself (respecting triviality
of the bundle over deeper strata) that lifts to a homotopy from $V_{\pm}$
to the reduced bundle data corresponding to the trivial element. Such a
homotopy induces a pair of bundles over $\widehat{B}_{\alpha}\times
Y_{\alpha}\times [0,1]$ trivial at the endpoints and trivial at all strata
deeper than $Y_{\alpha}$ and hence an element of $K^1_{\red}
(Y_{\alpha};P).$ A similar argument shows that any element in the kernel of
the upper left arrow is of the form discussed in the construction of the
connecting homomorphism.

Finally we prove exactness at the bottom left corner. Suppose that an
element in $K^1_{\red}(Y_\alpha ;P)$ inserted into the neck near $\hB\times
H_{\alpha}$ is homotopic to the trivial element, the homotopy preserving
the appropriate trivializations corresponding to greated depth. This is
equivalent to the existence of a bundle $V_{[0,1]}$ over $\hB\times Y\times
[0,1]$ trivial at $\widehat{B}\times Y\times \{1\}$ and equal to the given
element at $\hB\times Y\times \{0\}$. Denote by $V_t$ the bundle above
$\hB\times Y\times \{t\};$ we need to show is that this data allows one to
extend the lift of the given bundle from $\hB\times H_{\alpha}\times [0,1]$
to $\B\times Y\times [0,1]$ such that the bundle data is trivial
everywhere (after a homotopy) except at $\widehat{B_{\alpha}}\times
Y_{\alpha}\times [0,1]$. Fixing a collar neighborhood
$\widehat{B}\times H_{\alpha}\times [0,1]\times [0,1] \subset
\widehat{B}\times Y\times [0,1]$ and extending the lifted bundle over
$\widehat{B}\times Y\times \{t\}$ by embedding it into the bundle
$V_0\big|_{\widehat{B}\times H_{\alpha}\times [t,1]}$. This can be seen to
be a bundle over $\widehat{B}\times Y\times [0,1]$ extending the lift and
be nullhomotopic due to the existence of the nullhomotopy in the
beginning. The only issue is that the bundle is not trivial over
$\widehat{B}\times Y\times \{0\}$. To fix this, we use the above
nullhomotopy to shift the bundle into the required one. Namely, over
$\widehat{B}\times Y\times \{t\}$ shift the bundle to
$V_{1-t}\big|_{\widehat{B}\times H_{\alpha}\times [t,1]}$. This is still a
bundle that now has the appropriate triviality conditions and it is still a
lift because the above homotopy does not affect $\widehat{B}\times
H_{\alpha}.$
\end{proof}

\begin{proposition}\label{10.7.2018.2} For any subtrees $A'=A\setminus P$
  and $A'\setminus\{\alpha\}$ there is a six term exact sequence 
\begin{equation}
\xymatrix{
H^0_{\dl}(Y_*;P\cup\{\alpha \})\ar[r]&H^0_{\dl}(Y_*;P)\ar[r]&H^0_{\dl}(Y_\alpha ;P)\ar[d]\\
H^1_{\dl}(Y_\alpha ;P)\ar[u]&H^1_{\dl}(Y_*;P)\ar[l]&H^1_{\dl}(Y_*;P\cup\{\alpha \}).\ar[l]
}
\label{10.7.2018.3a}\end{equation}
\end{proposition}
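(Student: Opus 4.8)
The plan is to construct the six-term exact sequence for delocalized cohomology by exact analogy with the K-theory sequence of Proposition~\ref{EKR.55}, replacing reduced bundles by twisted deRham forms and the pair/suspension Grothendieck construction by the usual long exact sequence of a pair in deRham cohomology. The underlying short exact sequence of complexes is the key object: the restriction of the twisted deRham complex $\CI(Y_*;\Lambda^*_{\dl})$ to forms supported (i.e.\ trivialized) away from the stratum $Y_\alpha$. Concretely, for the two subtrees $A'=A\setminus P$ and $A'\setminus\{\alpha\}=A\setminus(P\cup\{\alpha\})$ I would set up the short exact sequence of cochain complexes
\begin{equation*}
0\longrightarrow \CI(Y_*;\Lambda^*_{\dl};P\cup\{\alpha\})\longrightarrow
\CI(Y_*;\Lambda^*_{\dl};P)\longrightarrow
\CI(Y_\alpha;\Lambda^*_{\dl};P)\longrightarrow 0,
\end{equation*}
where the first map is inclusion and the second is restriction of the $\alpha$-component $u_\alpha$ to $Y_\alpha$ (with the compatibility conditions \eqref{E-K-n.475} guaranteeing that the kernel is exactly the forms trivialized also on $Y_\alpha$). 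The desired six-term sequence \eqref{10.7.2018.3a} is then simply the long exact cohomology sequence of this short exact sequence, with the $\ZZ/2$-grading into even and odd forms collapsing the long exact sequence into six terms in the standard way.

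**First I would** verify that the second map is genuinely surjective: given a twisted form $u_\alpha$ on $Y_\alpha$ trivialized on the strata indexed by $P$, one must extend it to a compatible element of $\CI(Y_*;\Lambda^*_{\dl};P)$, filling in the deeper components $u_\beta$ for $\beta>\alpha$ via the augmented pull-backs $\pi_{\alpha\beta}^{\#}$ of Lemma~\ref{E-K-n.474} and extending $u_0$ into the principal piece across the collar of $H_\alpha(Y_0)$ using the rigidified product structure near the boundary. Here one uses a cutoff in the normal collar variable, and the fact that $d$ commutes with $\pi_{\alpha\beta}^{\#}$ (again Lemma~\ref{E-K-n.474}) ensures the extension remains in the complex and that the maps are cochain maps. **Next** I would identify the relative cohomology groups: $H^*_{\dl}(Y_*;P)$ is by definition the cohomology of $\CI(Y_*;\Lambda^*_{\dl};P)$ with respect to the descended differential \eqref{E-K-n.455}, and likewise for the other two terms, so the connecting homomorphism of the long exact sequence is precisely the map $H^*_{\dl}(Y_\alpha;P)\to H^{*+1}_{\dl}(Y_*;P\cup\{\alpha\})$ appearing in \eqref{10.7.2018.3a}.

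**The main obstacle** I expect is the surjectivity/extension step, and in particular checking that the twisting relation \eqref{E-K-n.469} is respected throughout the extension. The twisted coefficient ring $R(G)$ varies with the isotropy group $B_\alpha$ along the tree, so the augmented pull-back \eqref{E-K-n.456} genuinely changes the coefficient side (summing Chern characters over $\hat f_{kj}\in\hGB$ as in \eqref{E-K-n.473}), and one must confirm that the extended form transforms correctly under $\hGB[\beta]$ at each deeper stratum $\beta$ simultaneously. This is the cohomological shadow of exactly the compatibility bookkeeping carried out at the bundle level in Proposition~\ref{EKR.55}, and the cleanest route is to mirror that argument: the collar/neck insertion picture used there to define the connecting maps and check exactness transfers verbatim to forms, with homotopies of bundles replaced by the chain homotopies implicit in the long exact sequence. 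Once the short exact sequence of complexes is in hand and shown to consist of cochain maps intertwining $d$, the remaining exactness is the purely homological standard output, so the only real work is the geometric construction of the sequence and the verification that $d$ descends compatibly — both of which reduce, as throughout the paper, to the single-isotropy computation underlying Lemma~\ref{E-K-n.474}.
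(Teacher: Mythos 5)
Your proposal is correct and follows essentially the same route as the paper, whose proof is only the one-line remark that the result follows by ``combining standard arguments for the long exact sequence for the cohomology of a manifold relative to its boundary with the arguments as in the case of K-theory above.'' Your short exact sequence of twisted deRham complexes, the collar-extension argument for surjectivity of restriction, and the $\ZZ/2$-graded collapse of the resulting long exact sequence are precisely the ``standard arguments'' the authors have in mind, worked out in more detail than the paper itself provides.
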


\begin{proof} This can be proved by combining standard arguments for the
  long exact sequence for the cohomology of a manifold relative to its
  boundary with the arguments as in the case of K-theory above.
\end{proof}

\section{The isomorphism}\label{Iso}

\begin{theorem}[See \cite{MR86g:55006}]\label{E-K-n.460} For the action of
  a compact abelian Lie group on a compact manifold the Chern
  character defines an isomorphism
\begin{equation}
K^*_G(X)\otimes\bbC\longrightarrow H_{G,\dl}^{*}(X).
\label{E-K-n.463}\end{equation}
\end{theorem}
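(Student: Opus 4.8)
The plan is to prove Theorem~\ref{E-K-n.460} by induction on the depth of the isotropy tree, using the six-term exact sequences from Propositions~\ref{EKR.55} and \ref{10.7.2018.2} together with the single-isotropy-type base case already established in Proposition~\ref{EKR.29}. The Chern character, by Proposition~\ref{EKR.49}, defines a natural transformation from reduced K-theory to the delocalized deRham cohomology at every level of the tree and compatible with augmented pull-back, so it maps the K-theory six-term sequence to the cohomology six-term sequence. First I would set up the induction: the deepest strata $Y_\alpha$ (maximal $\alpha$ in the partial order) carry free quotient actions after reduction, or more precisely a fixed isotropy group with no deeper strata, so Proposition~\ref{EKR.29} applies directly to give that $\Ch$ is an isomorphism on $K^*_{\red}(Y_\alpha)\otimes\bbC \to H^*_{\dl}(Y_\alpha)$ there.

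For the inductive step I would prune the tree by removing one maximal element $\alpha$ at a time, passing from a subtree $A'$ to $A' \setminus \{\alpha\}$ as in \eqref{EKR.37}. The Chern character induces a morphism of the two six-term exact sequences \eqref{10.7.2018.3} and \eqref{10.7.2018.3a}, which after tensoring the K-theory sequence with $\bbC$ becomes a ladder of abelian groups. The outer terms $K^*_{\red}(Y_\alpha;P)\otimes\bbC \to H^*_{\dl}(Y_\alpha;P)$ are isomorphisms by the base case Proposition~\ref{EKR.29} (the single stratum $Y_\alpha$), and the relative terms for the smaller subtree are isomorphisms by the inductive hypothesis. The Five Lemma then forces the middle map $K^*_{\red}(Y_*;P)\otimes\bbC \to H^*_{\dl}(Y_*;P)$ to be an isomorphism as well. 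Iterating this pruning until $P = \emptyset$ recovers the absolute statement $K^*_G(X)\otimes\bbC \cong H^*_{G,\dl}(X)$ via the identifications \eqref{EKR.51} and Theorem~\ref{abelianTh}.

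Two points require care. The first is exactness of the tensored K-theory sequence: tensoring with $\bbC$ is exact over $\bbZ$, so the six-term sequence \eqref{10.7.2018.3} remains exact after $\otimes\,\bbC$, and no $\operatorname{Tor}$ terms intervene. The second, and the point I expect to be the main obstacle, is the \emph{commutativity} of the ladder, in particular that the Chern character intertwines the connecting homomorphisms. The connecting maps in \eqref{10.7.2018.3} are built geometrically by inserting a suspended bundle into a collar neighborhood of $\hB\times H_\alpha$ and tensoring with the Bott class, while the connecting maps in \eqref{10.7.2018.3a} arise from the analogous collar/coboundary construction for relative deRham cohomology. I would verify compatibility by checking that $\Ch$ respects the Bott isomorphism (multiplicativity of the Chern character under the external product with the Bott bundle on $[0,1]^2$) and commutes with the collar clutching, which reduces to the naturality of $\Ch$ under the augmented pull-back $\pi_\alpha^{\#}$ established in Lemma~\ref{E-K-n.474}. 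Once this single square is seen to commute, the remaining squares commute by naturality of $\Ch$ under restriction and inclusion, and the Five Lemma closes the induction.
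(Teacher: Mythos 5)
Your proposal follows essentially the same route as the paper: the Chern character maps the six-term exact sequence of Proposition~\ref{EKR.55} to that of Proposition~\ref{10.7.2018.2}, one tensors the K-theory row with $\bbC$, and the Five Lemma is applied inductively over subtrees $A_0=\{0\}\subset\cdots\subset A_N=A$ with Proposition~\ref{EKR.29} supplying the single-isotropy-type base case and the outer terms. Your explicit attention to the commutativity of the square involving the connecting homomorphisms (via multiplicativity of $\Ch$ under the Bott class and naturality under the augmented pull-back) is a point the paper asserts without elaboration, so your treatment is if anything slightly more complete.
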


\begin{proof} For any subtrees $A'=A\setminus P$ and $A'\setminus P',$ $P'=P\cup\{\alpha
  \}$ the exact sequences \eqref{10.7.2018.3} and \eqref{10.7.2018.3a}
  combine to form a commutative diagramme 
\begin{equation}
\xymatrix@!=3.5pc{
K^0_{\red}(Y_*;P')\ar[rr]\ar[dr]^{\Ch}&&
K^0_{\red}(Y_*;P)\ar[rr]\ar[d]^{\Ch}&&
K^0_{\red}(Y_\alpha ;P)\ar[dl]^{\Ch}\ar[ddd]
\\
&H^0_{\dl}(Y_*;P')\ar[r]&H^0_{\dl}(Y_*;P)\ar[r]&H^0_{\dl}(Y_\alpha ;P)\ar[d]\\
&H^1_{\dl}(Y_\alpha ;P)\ar[u]&H^1_{\dl}(Y_*;P)\ar[l]&H^1_{\dl}(Y_*;P').\ar[l]
\\
K^1_{\red}(Y_\alpha ;P)\ar[uuu]\ar[ur]_{\Ch}&&
K^1_{\red}(Y_*;P)\ar[ll]\ar[u]^{\Ch}&&
K^1_{\red}(Y_*;P' \})\ar[ul]^{\Ch}.\ar[ll]
}
\label{EKR.54}\end{equation}
Tensoring the K-theory part with $\bbC$ therefore also gives a commutative
diagramme with both six-term sequences exact. Now we may proceed by
induction using the fives lemma repeatedly 
\begin{equation}
A_0=\{0\}\subset A_1\dots\subset A_N=A
\label{EKR.38}\end{equation}
where the initial inductive step is given by Proposition~\ref{EKR.29}, for $A_0$ and
the central column is always exact.
\end{proof}

\section{Examples}\label{Ex}

The examples considered here are covered by Proposition 3.19 in Segal's
paper \cite{MR0234452}. We illustrate here how the same conclusions can be reached by
resolution.

Consider the standard circle action on the 2-sphere given by rotation
around an axis. The two poles are fixed points and on the complement the
action is free.  Radial blow up of the two poles replaces the 2-sphere by a
compact cylinder $I\times\bbS$ with free circle action and quotient, an interval, $I.$
Thus. from Theorem~\ref{abelianTh}, equivariant bundles up to isomorphism are in 1-1
correspondence with `reduced bundles' consisting of a bundle over $I$ with
isomorphisms with reduced bundles over $Y_*=\{I,\{N\},\{S\}\}.$ Over the
end-points these are simply (finitely-supported) bundles over
$\bbZ=\widehat{\UU(1)},$ i.e.\ a finite collection of vector spaces. Over
the principal space $I$ we simply have a vector bundle. The
pull-back map is trivial and the augmented pull-back map reduces to
summation. Thus reduced bundles in this case amount to a bundle over $I$
with decompostions into subspaces over the end-points. These decompositons
are unrelated up to isomorphism, except that dimensions must sum to the
dimension of the bundle over $I.$ Thus the equivariant K-theory in this
case is
\begin{equation}
K_{\UU(1)}(\bbS^2)=\cR(\UU(1))\oplus\cR(\UU(1))/\simeq
\label{E1}\end{equation}
where the relation is given by equality of the images of the dimension
maps. The action of $\widehat{\UU(1)}$ is the diagonal action on the
representation rings

The odd equivariant K-theory is given by the even equivariant K-theory of
$\bbS^2\times(0,1).$ The triviality of these bundles over the corners
$\{0\}\times\pa I$ and $\{1\}\times\pa I$ implies there triviality over the
end-poitns so the odd equivariant K-theory corresponds to pairs of bundles
supported on $(0,1)\times(I\setminus\pa I).$ Thus
$$
K^1_{\UU(1)}(\bbS^2)= K^0((-1,1)\times (0,1))=K^0(\bbR^2)=\bbZ.
$$

If $J$ is a generator of the odd K-theory and $H$ is the Hoof bundle then,
as an algebra, the total K-theory is generated by $H\oplus I=J.$ Since $H^2= 2H-J$
this recovers the result of Segal alluded to above.

\begin{lemma}\label{7.6.2018.1} For a product group action by compact
  abelian groups $A\subset G$ where $A$ acts trivially the equivariant
  K-theory is
\begin{equation}
K_{A\times G}(X)=\cR(A)\otimes K_{G}(X).
\label{7.6.2018.2}\end{equation}
\end{lemma}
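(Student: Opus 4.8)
The plan is to establish the product decomposition $K_{A\times G}(X)=\cR(A)\otimes K_G(X)$ by exploiting the fact that $A$ acts trivially on $X$, so that every $A\times G$-equivariant bundle carries a fibrewise linear $A$-action commuting with the $G$-action. Since $A$ is compact abelian, its irreducible representations are one-dimensional and indexed by the dual group $\hat A$. First I would decompose the fibres of any bundle $E\in\Bun_{A\times G}(X)$ under the $A$-action into isotypical components,
\begin{equation*}
E=\bigoplus_{\hat a\in\hat A,\ \text{finite}} a\otimes E_{\hat a},
\end{equation*}
where $E_{\hat a}$ is the subbundle on which $A$ acts through the character $\hat a$. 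Because $A$ acts trivially on the base, this decomposition is global (not merely local), and each $E_{\hat a}$ inherits a $G$-action commuting with the now-scalar $A$-action, so $E_{\hat a}\in\Bun_G(X)$. This is exactly the mechanism used in \eqref{EKR.22} in the proof of Proposition~\ref{E-K-n.424}, specialized to the case where the relevant isotropy subgroup is all of $A$ and the base is unchanged; the difference is that here the $A$-factor is genuinely a direct factor of the group, so no splitting or twisting is needed.

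Next I would promote this fibrewise decomposition to a functor and check it is an equivalence of categories. The assignment $E\mapsto (E_{\hat a})_{\hat a\in\hat A}$ sends an $A\times G$-bundle to a finitely-supported family of $G$-bundles, i.e.\ an object of $\Bun_c(\hat A)\otimes\Bun_G(X)$ in the evident sense, and conversely $\bigoplus_{\hat a} a\otimes F_{\hat a}$ reconstructs an $A\times G$-bundle from such a family. Equivariant bundle maps must intertwine the $A$-action and hence preserve isotypical components, so morphisms correspond exactly; this gives an equivalence
\begin{equation*}
\Bun_{A\times G}(X)\cong \Bun_c(\hat A)\times\Bun_G(X).
\end{equation*}
Passing to Grothendieck groups and identifying $\Bun_c(\hat A)$ with the representation ring $\cR(A)=\hat A(\bbZ)$ as in the discussion preceding Proposition~\ref{E-K-n.423}, the additive structure yields $K_{A\times G}(X)=\cR(A)\otimes_{\bbZ} K_G(X)$, with the tensor factor recording the $A$-weight. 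I would note that the ring and $\cR(A\times G)\cong\cR(A)\otimes\cR(G)$-module structures match under this identification because tensor product of bundles respects the $A$-grading.

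The main obstacle is not the algebra but ensuring the isotypical decomposition is well-behaved as a decomposition of \emph{smooth} bundles and that finiteness of the support holds uniformly. Since $X$ is compact and the total bundle has finite rank, only finitely many characters $\hat a$ can occur, so support is automatically finite; and the isotypical projections are given by averaging the $A$-action against characters (an integral over the compact group $A$), which produces smooth bundle endomorphisms whose images are smooth subbundles of locally constant rank on each component. The only subtlety to verify carefully is that these projectors have constant rank on connected components of $X$ so that the $E_{\hat a}$ are honest vector bundles rather than sheaves of varying fibre dimension; this follows from continuity of the projector together with the connectedness reduction already in force, paralleling the permitted variation of dimension over components allowed throughout the paper's use of $\Bun_c$.
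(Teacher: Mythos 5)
Your proposal is correct and follows essentially the same route as the paper, whose proof is a one-line appeal to exactly this decomposition of bundles under the $A$-action and the naturality of lifting representations for a product group. Your expansion --- the isotypical decomposition over $\hat A$, the observation that the commuting $G$-action preserves each component, and the smoothness and finite-support checks via averaging projectors --- simply makes explicit what the paper leaves implicit.
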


\begin{proof} This follows immediately from the decomposition of bundles
  under the action of $A$ and the naturality of the lift of representations
  for a product.
\end{proof}

An immediate corollary of this Lemma in combination and the calculation
above shows it follows that for the rotation around on the sphere around an
axis with $n$ times the usual speed the equivariant K-theory is
\begin{equation}
\cR(\bbZ_n)
\otimes \bigl(\cI(\UU(1))\oplus\cI(\UU(1))/\simeq\bigr).
\label{E-K-n.407}\end{equation}

Consider next 2-dimensional complex projective space $\bbP^2$ with the circle
action 
\begin{equation}
\UU(1)\ni e^{i\theta}:\bbP^2\ni[z_1:z_2:z_3]\longmapsto [e^{i\theta}z_1:z_2:e^{-i\theta}z_3]\in
\bbP^2.
\label{E-K-n.393}\end{equation}
This is principally free and has three fixed points; at $[0:0:1],$
$[0:1:0]$ and $[1:0:0].$ On complement of the fixed points the isotropy
group is $\bbZ_2=\{1,\-1\}\subset\UU(1)$ on the the sphere $\bbP=\{[z_1:0:z_3]\}.$ 

This sphere is precisely the complement of the affine space given by
$z_2\not=0,$ i.e.\ $\{[z_1:1:z_3]\}$ around the isolated fixed point at
$[0:1:0].$ If we blow this point up \emph{in the complex sense} we replace
$\bbC^2$ by the canonical bundle over projective space 
\begin{equation}
[\bbC^2;\{0\}]_{\bbC}=K\bbP.
\label{E-K-n.396}\end{equation}

The complement of the zero section is covered by the two coordinate patches 
\begin{equation}
U'_1=\{(z_1,z_3);|z_1|>\ha|z_3|>0\},\ U'_3=\{(z_1,z_3);|z_3|>\ha|z_1|>0\}.
\label{E-K-n.400}\end{equation}
These project to dense subsets of the two regions of $\bbP^2:$ 
\begin{equation}
\begin{gathered}
U_1=\{[1:\zeta_2:\zeta_3)],\ \zeta_2=\frac1{z_1},\ \zeta_3=\frac{z_3}{z_1},\ |\zeta_3|<\ha\},
\\
U_3=\{[\eta_1:\eta_2:1)],\ \eta_2=\frac1{z_3},\ \eta_1=\frac{z_1}{z_3},\ |\eta_1|<\ha\}
\end{gathered}
\label{E-K-n.401}\end{equation}
which together cover $\bbP^2\setminus\{[0:1:0]\}.$ The real blow-up of
$\bbP,$ respectively $\{\zeta_2=0\}\subset U_1$ and $\{\eta_2=0\}\subset
U_3$ replaces them by 
\begin{equation}
\begin{gathered}
\widetilde U_1
=[0,\infty)_r\times\bbS\times\{|\zeta_3|<2\}\ni(r,e^{i\delta},\zeta_3)\longmapsto
  [1:re^{i\delta};\zeta_3]\in U_1\\
\widetilde U_3
=[0,\infty)_R\times\bbS\times\{|\zeta_1|<2\}\ni(R,e^{i\gamma},\zeta_1)\longmapsto
  [\zeta_1:Re^{i\gamma};1]\in U_3.
\end{gathered}
\label{E-K-n.402}\end{equation}
The intersection of $U_1$ and $U_3$ corresponds to the two regions and
transition map 
\begin{multline}
\widetilde U_1\supset\{(r,e^{i\delta},\zeta_3)\in [0,\infty)_r\times\bbS\times\bbC;\ha<|\zeta_3|<2\}\ni(r,e^{i\delta},\zeta_3)\longmapsto\\
(\frac r{|\zeta_3|},e^{i\delta}\frac{\overline{\zeta_3}}{|\zeta_3|},1/\zeta_1)\in
\widetilde U_1\supset\{(R,e^{i\gamma},\eta_1)\in [0,\infty)_R\times\bbS\times\bbC;\ha<|\eta_1|<2\}.
\label{E-K-n.403}\end{multline}
each $\widetilde U_i$ is the product of a boundary variable, a
circle and an open disk with the transition map patching this to the
product of a boundary variable and a circle bundle over the sphere.

The circle actions are therefore
\begin{equation}
\begin{gathered}
\widetilde U_1\ni(r,e^{i\delta},\zeta_3)\longmapsto (r,e^{i(\delta-\theta)},e^{-2i\theta}\zeta_3)
\\
\widetilde U_3\ni(R,e^{i\gamma},\eta_1)\longmapsto(R,e^{i(\delta+\theta)},e^{2i\theta}\eta_1).
\end{gathered}
\label{E-K-n.404}\end{equation}

The remaining fixed points at $[1;0;0]\in U_1$ and $[0:0:1]\in U_2$ lift
under the blow-up of $\bbP$ to the circles 
\begin{equation}
\{(0,e^{i\delta},0)\}\subset\widetilde U_1,\ \{(0,e^{i\gamma},0)\}\subset\widetilde U_3
\label{E-K-n.405}\end{equation}
on which the circle actions are now free. Nevertheless, our prescription
calls for these to be blown up.

Thus identifies
\begin{equation}
[\bbP^2;\{[0:1:0]\}]_{\bbC}=(\bbP K)\bbP
\label{E-K-n.394}\end{equation}
as projective compactification of the canonical bundle of $\bbP$ where the
exceptional divisor corresponding to the blow-up of $[0:1:0]$ is the zero
section and the section at infinity is the projective line at $\{z_2=0\}.$

The real blow-up of these two projective lines in \eqref{E-K-n.396} shows
that 
\begin{equation}
[\bbP^2;\{[0:1:0]\},\bbP]_{\bbR}=[0,1]\times\bbS^3\longrightarrow \bbP
\label{E-K-n.397}\end{equation}
is the corresponding bundle of cyclinders over $\bbP,$ so the product of a
radial invterval and the Hopf bundle $\bbS^3\longrightarrow \bbP.$  

The circle action on $\bbP^2$ restricted to the affine $\{z_2\not=0\}$ in
\eqref{E-K-n.397} is the action on the 3-sphere as 
\begin{equation}
(Z_1,Z_3)\longmapsto (e^{i\theta}Z_1,e^{-i\theta}Z_3)
\label{E-K-n.398}\end{equation}
which is the Hopf fibration after conjugating the second variable.

\providecommand{\bysame}{\leavevmode\hbox to3em{\hrulefill}\thinspace}
\providecommand{\MR}{\relax\ifhmode\unskip\space\fi MR }
\providecommand{\MRhref}[2]{%
  \href{http://www.ams.org/mathscinet-getitem?mr=#1}{#2}
}
\providecommand{\href}[2]{#2}


\begin{thebibliography}{1}

\bibitem{MR2560748}
Pierre Albin and Richard Melrose, \emph{Resolution of smooth group actions},
  Spectral theory and geometric analysis, Contemp. Math., vol. 535, Amer. Math.
  Soc., Providence, RI, 2011, pp.~1--26. \MR{2560748 (2012j:58008)}

\bibitem{MR86g:55006}
Paul Baum, Jean-Luc Brylinski, and Robert MacPherson, \emph{Cohomologie
  \'equivariante d\'elocalis\'ee}, C. R. Acad. Sci. Paris S\'er. I Math.
  \textbf{300} (1985), no.~17, 605--608. \MR{86g:55006}

\bibitem{Berline-Getzler-Vergne1}
Nicole Berline, Ezra Getzler, and Mich{\`e}le Vergne, \emph{Heat kernels and
  {D}irac operators}, Springer-Verlag, Berlin, 1992.

\bibitem{MR1302758}
Ezra Getzler, \emph{The equivariant {C}hern character for non-compact {L}ie
  groups}, Adv. Math. \textbf{109} (1994), no.~1, 88--107. \MR{1302758}

\bibitem{MR1970011}
Ioanid Rosu, \emph{Equivariant {$K$}-theory and equivariant cohomology}, Math.
  Z. \textbf{243} (2003), no.~3, 423--448, With an appendix by Allen Knutson
  and Rosu.

\bibitem{MR0234452}
Graeme Segal, \emph{Equivariant {$K$}-theory}, Inst. Hautes \'Etudes Sci. Publ.
  Math. (1968), no.~34, 129--151. \MR{0234452}

\bibitem{Wassermann}
Antony Wasserman, \emph{Equivariant {K}-theory {I}: compact transformation
  groups with one orbit type}, IHES preprint, 1985.

\end{thebibliography}

\end{document}